\documentclass[11pt,letterpaper]{article}
\usepackage{amsmath,amssymb,amsthm,mathtools,bm}
\usepackage{bbm}
\usepackage[textwidth=6.2in,textheight=9in,centering]{geometry}
\usepackage{microtype,booktabs,array,graphicx,enumitem}
\usepackage[numbers,sort&compress]{natbib}
\usepackage{pgfplots}
\usepgfplotslibrary{groupplots}
\usepackage[hidelinks]{hyperref}
\usepackage{comment}
\hypersetup{pdftitle={Local Maxima of Discrete Gaussian Processes: Exact Distributions and Infill Limits},pdfauthor={Dan Cheng and John Ginos}}

\setlist[enumerate]{itemsep=.3em}
\numberwithin{equation}{section}
\newtheorem{proposition}{Proposition}[section]
\newtheorem{theorem}[proposition]{Theorem}
\newtheorem{lemma}[proposition]{Lemma}
\newtheorem{corollary}[proposition]{Corollary}
\theoremstyle{definition}

\newtheorem{example}[proposition]{Example}
\theoremstyle{remark}
\newtheorem{remark}[proposition]{Remark}
\newcommand{\1}{\mathbbm{1}}
\newcommand{\Var}{\operatorname{Var}}
\newcommand{\Cov}{\operatorname{Cov}}
\newcommand{\Corr}{\operatorname{Corr}}
\newcommand{\dd}{\,\mathrm{d}}
\newcommand{\E}{\mathbb{E}}
\renewcommand{\P}{\mathbb{P}}
\newcommand{\R}{\mathbb{R}}
\newcommand{\Z}{\mathbb{Z}}

\begin{document}
	
	\title {Local Maxima of Discrete Gaussian Processes: Exact Distributions and Infill Limits}
	\author{Dan Cheng \\ Arizona State University \and John Ginos \\ Arizona State University }
	
	\date{}
	
	\maketitle
	
	\begin{abstract}
		We study local maxima of centered, unit-variance Gaussian processes
		observed on one-dimensional grids. For general nonstationary covariance,
		we give exact sitewise peak probabilities, conditional height densities,
		and expected peak counts using the covariance of a site and its two
		neighbors. The height limits established here are unified by a continuous family indexed by the magnitude of the conditional height-curvature correlation parameter $\eta\in[0,1]$, with a Gaussian endpoint at $\eta=0$, and
		smooth peak laws for $0<\eta\le 1$ with a Rayleigh endpoint at $\eta=1$.
		Under continuous covariance derivatives through total order four, we prove convergence to the smooth
		family and obtain corresponding intensity and expected count limits.
		Additional sample path and Kac--Rice regularity assumptions identify
		these limits with the intensity and Palm height law of continuous local
		maxima. For general nonstationary covariance, a necessary and sufficient
		increment criterion characterizes convergence to the Gaussian endpoint,
		including when peak probabilities vanish. Explicit finite-grid bounds
		control density and tail errors and yield uniform approximations across
		sites. For each fixed nonconstant, mean-square continuous stationary
		process, the fourth spectral moment gives a complete classification:
		finiteness yields a smooth family member, whereas an infinite moment yields the Gaussian endpoint and divergent expected peak counts on every interval of positive length. The height laws converge in total variation in both regimes. Nonstationary examples, Mat\'ern covariance models, and numerical comparisons illustrate the results, including logarithmic boundary behavior and the distinction between distributional convergence and relative tail accuracy.
	\end{abstract}
	
	\noindent{\small{\bf Keywords}: Gaussian process; local maximum; peak height distribution; discrete; infill asymptotics; rough covariance; Kac--Rice formula; Mat\'ern covariance.}
	
	\noindent{\small{\bf Mathematics Subject Classification}:\ 60G15, 62G32, 62M10, 60G17.}
	
	\section{Introduction}\label{sec:introduction}
	
	Local maxima provide natural candidates for detecting signals in temporal
	and spatial data. Applications include neuroimaging, genomics, change point
	analysis, and environmental signal detection. Selecting a point because it
	is a local maximum changes its null height distribution: a marginally
	Gaussian process need not have Gaussian heights at its selected peaks.
	Conditional peak height distributions therefore provide a building block
	for inference at local maxima
	\citep{SchwartzmanGavrilovAdler11,CS17,CS19sphere,ChangePoint}.
	
	For smooth Gaussian processes and fields, expected numbers and height
	distributions of local maxima are studied through Kac--Rice formulas
	\citep{Rice45,CL67,AT07,AzaisWschebor09}.
	Explicit peak height distributions and their dependence on covariance
	structure have been developed in
	\citep{CS15,CS18,Cheng25}.
	There is an essential distinction between continuous and discrete
	parameter spaces. Under the usual nondegeneracy and smoothness conditions,
	a fixed deterministic point of a continuous process is a local maximum
	with probability zero. The height distribution of a continuous peak is
	then defined through local Palm sampling and calculated by Kac--Rice.
	On a discrete grid, the event that a specified site exceeds its two
	neighbors has positive probability under the finite-dimensional
	nondegeneracy conditions, so its height has an ordinary conditional law.
	
	The discrete law is directly relevant when data are recorded at finitely
	many locations and peaks are identified by comparisons between neighboring
	observations. A continuous approximation may be accurate for a sufficiently
	fine grid and a sufficiently smooth process, but its accuracy depends on
	both the covariance and the mesh. Discrete local maximum methods for
	bounding excursion probabilities were developed by
	\citep{Worsley05,TaylorWorsleyGosselin07}.
	In particular, Taylor et al.
	\citep{TaylorWorsleyGosselin07} calculate the conditional
	probability that neighboring observations fall below a given central
	height and integrate this probability against the Gaussian density. In
	one dimension, that calculation is equivalent to the stationary joint
	height and peak probability used here. Lin et al.
	\citep{LinSchwartzmanDavenport25} explicitly normalize such
	probabilities to obtain finite-lattice peak height laws and develop a Monte
	Carlo method for general stationary neighborhood covariance matrices.
	
	The present paper complements these finite-lattice calculations with
	nonstationary formulas and rigorous infill limits. We establish convergence
	to a curvature-weighted density under fourth-order covariance smoothness,
	give a necessary and sufficient nonstationary criterion for a Gaussian
	height limit, and classify stationary limits through the fourth spectral
	moment. Remark~\ref{remark:unify} organizes the resulting height laws in
	the continuous family $\{h_\eta^{\mathrm{SM}}:0\le\eta\le1\}$, whose
	endpoints are Gaussian and Rayleigh. The finite-dimensional Gaussian
	conditioning calculation is an established ingredient; our principal
	results concern how discrete peak selection and covariance regularity
	determine the limiting law and the accuracy of its finite-grid
	approximation.

Let $X=\{X(t):t\in I\}$ be a centered, unit-variance, real-valued Gaussian process on an
interval $I\subset\R$, with covariance
\[
C(s,t)=\Cov\{X(s),X(t)\}.
\]
In the stationary case, we write $C(s,t)=r(s-t)$. Under stationarity, since the quantities below do not depend on location, we omit their subscript $t$; for the spectral results, $X$ is understood to be defined on $\mathbb{R}$ and observed only on $I$.

For a mesh size $d>0$ and a site whose two neighbors lie in $I$, write the event of observing a local maximum at $t$ as
\[
\mathcal A_{t,d}
=\{X(t)>X(t-d),\ X(t)>X(t+d)\}.
\]
We study the sitewise peak probability $p_{t,d}=\P(\mathcal A_{t,d})$ and
the conditional peak height density
\[
h_{t,d}(x)
=\frac{\mathrm d}{\mathrm dx}
\P\{X(t)\le x\mid\mathcal A_{t,d}\}.
\]

The main contributions are as follows.
\begin{enumerate}
	\item We express $p_{t,d}$ and $h_{t,d}$ using a bivariate Gaussian orthant
	probability and a bivariate Gaussian distribution function. The formulas
	require only the covariance matrix of $\{X(t-d),X(t),X(t+d)\}$ and no
	sample path differentiability. Summing the sitewise peak probabilities
	and integrating the joint height densities gives exact expected counts
	of all grid peaks and of grid peaks above a threshold.
	
	\item When $C$ has four derivatives near the diagonal, we prove that
	$p_{t,d}/d$ converges to a curvature-weighted intensity and $h_{t,d}$
	converges to $h_{\eta_t}^{\mathrm{SM}}$, with $\eta_t\in(0,1]$;
	see Theorem~\ref{thm:smooth-limit}. The height law depends on the single
	parameter $\eta_t$, the magnitude of the conditional height-curvature
	correlation. The endpoint $\eta_t=1$ has zero conditional curvature
	variance and a Rayleigh height law. Under additional sample path and
	Kac--Rice regularity assumptions, these limits describe the intensity
	and Palm height distribution of continuous local maxima.
	
	\item For general nonstationary covariance, let
	$v_{t,d}^{\pm}=1-C(t,t\pm d)$ and let $q_{t,d}$ be the correlation
	between $X(t)-X(t-d)$ and $X(t)-X(t+d)$. We prove that
	\[
	R_{t,d}=\frac{v_{t,d}^{-}+v_{t,d}^{+}}{1+q_{t,d}}\to0
	\]
	is necessary and sufficient for weak convergence of peak heights to
	$N(0,1)$, and is equivalent to pointwise and $L^1$ convergence of their
	densities. The criterion allows the peak probability to vanish and
	does not require convergence of $q_{t,d}$. Explicit finite-grid bounds
	give uniform sitewise approximations and control relative tail errors
	over a specified threshold range. In the stationary case,
	$R_d=4\{1-r(d)\}^2/[4\{1-r(d)\}-\{1-r(2d)\}]$.
	The criterion includes nondifferentiable processes and processes with
	one, but not two, mean-square derivatives.
	
	\item For each fixed nonconstant, mean-square continuous stationary
	process with spectral moments $m_j$, we give a complete classification.
	If $m_4<\infty$, the limiting density is $h_\eta^{\mathrm{SM}}$ with
	$\eta=m_2/\sqrt{m_4}\in(0,1]$. If $m_4=\infty$, the limit is the
	Gaussian endpoint $h_0^{\mathrm{SM}}=\phi$, and expected peak counts
	diverge on every interval of positive length. The Mat\'ern family
	illustrates this boundary, including its logarithmic cases.
\end{enumerate}

The unification in Remark~\ref{remark:unify} gives a common description
of the limiting height laws established in this paper. The smooth density $h_{\eta}^{\mathrm{SM}}$ extends continuously to the Gaussian law as $\eta\downarrow0$ and to the Rayleigh law
as $\eta\uparrow1$, with convergence in total variation at both endpoints.
In the smooth regime, $\eta$ measures the relation between height and
curvature conditional on zero slope; the Gaussian endpoint describes
asymptotically height-neutral peak selection. For a fixed stationary
process, the spectral classification specifies the member that arises:
$\eta=0$ when $m_4=\infty$, and $\eta=m_2/\sqrt{m_4}>0$ when
$m_4<\infty$. The interpolation of the family therefore describes
variation across covariance models, while the infill theorems identify
the limits under their respective assumptions.

The difference between the smooth and rough regimes is fundamental.  For a
smooth process, the probability that a specified grid point is a local
maximum is of order $d$; summing over approximately $d^{-1}$ sites produces a
finite limiting number of continuous local maxima. The comparison event concentrates near zero slope, with a width proportional to negative curvature, and produces the curvature weighting in $h_{\eta_t}^{\mathrm{SM}}$ which is generally non-Gaussian. For a rough process, grid peaks become increasingly numerous as the grid is refined.  Whether $t$ is a discrete local maximum is determined by the increments from $X(t)$ to its neighboring grid values. As the grid spacing tends to zero, these increments become asymptotically independent of $X(t)$, so the local maximum event carries essentially no information about the height at $t$. This explains why the marginal Gaussian density reappears after conditioning.

The rest of the paper is organized as follows.  Section~\ref{sec:exact}
derives the exact finite-mesh distributions and expected count formulas.
Section~\ref{sec:smooth} proves the smooth infill limit and its stationary
specialization. Section~\ref{sec:rough} gives the nonstationary Gaussian-limit criterion,
the unified family interpretation, and the stationary spectral
classification.  Section~\ref{sec:examples} treats common covariance models, with stationary and nonstationary examples and numerical comparisons. Section~\ref{sec:discussion} discusses
approximation choices and the scope of the inference implications.

\section{Exact Discrete Distributions}\label{sec:exact}

\subsection{Grid peaks and notation}

Let \(I=[a,b]\) and, for \(d>0\), define
\[
\mathcal G_d=I\cap(a+d\mathbb Z),\qquad
\mathcal G_d^\circ
=\{t\in\mathcal G_d:t-d,t+d\in\mathcal G_d\}.
\]
We restrict attention to grid points whose two neighboring grid points also lie in $I$. Boundary points, which require a separate one-sided definition of a local maximum, are not considered here.
Throughout, $X$ is centered and $\Var\{X(t)\}=1$.  Write $\phi$, $\Phi$, and $\Psi=1-\Phi$ for the standard Gaussian
density, distribution function, and survival function. For
$\rho\in[-1,1]$, let $\Phi_2(x,y;\rho)$ be the distribution function
of a standard bivariate Gaussian vector with correlation $\rho$.
This definition includes the singular cases:
\begin{equation}\label{eq:singular-cdf}
	\Phi_2(x,y;1)=\Phi(\min\{x,y\}),\qquad
	\Phi_2(x,y;-1)=[\Phi(x)-\Phi(-y)]_+,
\end{equation}
where $z_+=\max\{z,0\}$.

For $t\in\mathcal G_d^\circ$, define the event of local maximum at $t$ and the corresponding probability as
\begin{equation}\label{A-def}
	\mathcal A_{t,d}
	=\{X(t)>X(t-d),\ X(t)>X(t+d)\},
	\qquad p_{t,d}=\P(\mathcal A_{t,d}).
\end{equation}
Whenever $p_{t,d}>0$, define respectively the cumulative distribution function and the density function of the height of a local maximum at $t$ as
\begin{align*}
	H_{t,d}(u)
	=\P\{X(t)\le u\mid\mathcal A_{t,d}\},\qquad
	h_{t,d}(u)=H_{t,d}'(u).
\end{align*}
Denote by $\overline H_{t,d}(u)=1-H_{t,d}(u)$ the tail of peak height distribution.

\subsection{The exact nonstationary formula}

For compact notation, set
\[
c_-=C(t,t-d),\quad c_+=C(t,t+d),\quad
c_0=C(t-d,t+d).
\]
When $c_\pm<1$, the two comparison differences $X(t)-X(t-d)$ and $X(t)-X(t+d)$ have positive variances $2(1-c_\pm)$, so ties between the center and either neighbor have probability zero. Their correlation is
\begin{equation}\label{eq:q-general}
	q_{t,d}
	=\Corr\{X(t)-X(t-d),X(t)-X(t+d)\}
	=\frac{1-c_--c_++c_0}{2\sqrt{(1-c_-)(1-c_+)}}.
\end{equation}
For $c_\pm\in (-1, 1)$, also let
\begin{align}\label{eq:xi-general}
	\xi_{t,d}
	=\frac{c_0-c_-c_+}
	{\sqrt{(1-c_-^2)(1-c_+^2)}},\quad 
	b_{t,d}^{-}
	=\sqrt{\frac{1-c_-}{1+c_-}},\quad
	b_{t,d}^{+}
	=\sqrt{\frac{1-c_+}{1+c_+}}.
\end{align}

\begin{theorem}[Exact sitewise peak law]\label{thm:exact}
	  Suppose $c_\pm\in(-1,1)$. Then $q_{t,d}\in(-1,1]$ and the peak height density at $t$ is given by
	\begin{equation}\label{eq:h-exact}
		h_{t,d}(x)=\frac{\phi(x)\Phi_2\!\left(
			b_{t,d}^{-}x,b_{t,d}^{+}x;\xi_{t,d}
			\right)}{p_{t,d}}, \quad p_{t,d}
			=\frac14+\frac{1}{2\pi}\arcsin(q_{t,d})>0, \quad  x\in\R.
	\end{equation}
\end{theorem}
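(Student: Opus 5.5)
The plan is to condition on the central value $X(t)=x$ and reduce everything to a bivariate Gaussian computation; the peak probability will come from the orthant probability of the two comparison differences.

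\emph{Range of $q_{t,d}$ and the value of $p_{t,d}$.} Since $c_\pm<1$, the differences $D_-=X(t)-X(t-d)$ and $D_+=X(t)-X(t+d)$ have positive variances $2(1-c_\pm)$. Hence the correlation $q_{t,d}$ in \eqref{eq:q-general} is well defined and lies in $[-1,1]$.

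To exclude $q_{t,d}=-1$, suppose it held. Then $D_+=-\lambda D_-$ almost surely for some $\lambda>0$, so $D_-$ and $D_+$ have opposite signs a.s. On the other hand, conditioning on $X(t)=x$ (done below) gives conditional means of $D_\pm$ proportional to $x(1-c_\pm)>0$ for $x>0$. I will show directly that $\P(D_->0,D_+>0)>0$ via the positivity of the integrand below; this contradicts $q=-1$.

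For $q\in(-1,1]$, Sheppard's orthant formula for a centered bivariate Gaussian pair gives
\[
p_{t,d}=\P(D_->0,D_+>0)=\tfrac14+\tfrac1{2\pi}\arcsin q_{t,d}.
\]
In the degenerate case $q=1$ both events coincide and $p=1/2$, which is consistent with the formula. In every case $p>0$.

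\emph{Density formula.} Write $X(t\mp d)=c_\mp X(t)+\sqrt{1-c_\mp^2}\,Z_\mp$, where $(Z_-,Z_+)$ is standard bivariate Gaussian and independent of $X(t)$. This representation uses $|c_\pm|<1$.

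The correlation of $(Z_-,Z_+)$ is
\[
\frac{\Cov\{X(t-d),X(t+d)\mid X(t)\}}{\sqrt{(1-c_-^2)(1-c_+^2)}}=\frac{c_0-c_-c_+}{\sqrt{(1-c_-^2)(1-c_+^2)}}=\xi_{t,d}.
\]
Given $X(t)=x$, the event $\mathcal A_{t,d}$ becomes $\{Z_\mp<x(1-c_\mp)/\sqrt{1-c_\mp^2}=b^\mp_{t,d}x\}$. Its conditional probability is therefore $\Phi_2(b^-x,b^+x;\xi)$, including the singular cases $\xi=\pm1$ defined in \eqref{eq:singular-cdf}.

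It follows that
\[
\P\{X(t)\le u,\mathcal A_{t,d}\}=\int_{-\infty}^u\phi(x)\Phi_2(b^-x,b^+x;\xi)\dd x .
\]
Differentiating in $u$ and dividing by $p_{t,d}$ gives \eqref{eq:h-exact}. Continuity of the integrand justifies the differentiation for every $x$.

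Positivity now follows because $\Phi_2(b^-x,b^+x;\xi)>0$ for large $x>0$, including when $\xi=-1$, since then $\Phi(b^-x)-\Phi(-b^+x)>0$ for $x>0$. Thus $p>0$, and this forces $q\ne-1$.

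\emph{Main obstacle.} The delicate step is the careful treatment of the degenerate cases, $\xi=\pm1$ and $q=1$. The conditioning argument must remain valid there, and the arcsine formula must be checked directly when the Gaussian vector $(D_-,D_+)$ is singular.
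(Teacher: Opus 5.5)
Your proof is correct. For the density it is the paper's argument: your representation $X(t\mp d)=c_\mp X(t)+\sqrt{1-c_\mp^2}\,Z_\mp$, with $(Z_-,Z_+)$ independent of $X(t)$, is exactly the paper's standardized conditional vector. Writing it as an explicit independent decomposition also makes the singular cases $\xi_{t,d}=\pm1$ immediate by Fubini.

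The one genuine difference is how you rule out $q_{t,d}=-1$.

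\emph{The paper's route.} It does this first, by Cauchy--Schwarz against $X(t)$:
\[
2(1+q_{t,d})=\Var(Z_-+Z_+)\ge\{\Cov(X(t),Z_-+Z_+)\}^2>0 .
\]
This needs no conditioning. It is also quantitative, giving $1+q_{t,d}\ge\frac14\bigl(\sqrt{1-c_-}+\sqrt{1-c_+}\bigr)^2$, which is equivalent to $\eta_{t,d}\le1$ in the notation of Section~\ref{sec:rough}.

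\emph{Your route.} You compute $p_{t,d}=\int_\R\phi(x)\Phi_2(b^-_{t,d}x,b^+_{t,d}x;\xi_{t,d})\dd x$ without using $q_{t,d}$. You then note that the integrand is positive for every $x>0$, including at $\xi_{t,d}=-1$, because $b^\pm_{t,d}>0$. Finally, $q_{t,d}=-1$ would make $D_-$ and $D_+$ almost surely of opposite sign, forcing $p_{t,d}=0$. This is non-circular and valid, though only qualitative.

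In a write-up, present the conditioning computation before the $q_{t,d}\ne-1$ step and before invoking Sheppard's formula, rather than referring forward to it. Both arguments rest on the same fact, $\Cov\{X(t),D_\pm\}=1-c_\pm>0$.
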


\begin{proof}
	Let $Y_-=X(t)-X(t-d)$ and $Y_+=X(t)-X(t+d)$. Since $X$ is
	centered and has unit variance,
	\[
	\Var(Y_\pm)=2(1-c_\pm)>0,\qquad
	\Cov\{X(t),Y_\pm\}=1-c_\pm>0.
	\]
	Define $Z_\pm=Y_\pm/\sqrt{2(1-c_\pm)}$. These are standard Gaussian
	variables with correlation $q_{t,d}$, and
	\[
	\Cov\{X(t),Z_\pm\}=\sqrt{\frac{1-c_\pm}{2}}>0.
	\]
	By Cauchy--Schwarz, together with $\Var\{X(t)\}=1$, we obtain
	\[
	2(1+q_{t,d})=\Var(Z_-+Z_+)
	\geq
	\left\{\sqrt{\frac{1-c_-}{2}}+
	\sqrt{\frac{1-c_+}{2}}\right\}^{\!2}>0.
	\]
	Hence $q_{t,d}>-1$. The bivariate Gaussian orthant identity
	\citep{Barnett:1991}, including the singular case $q_{t,d}=1$, gives
	\[
	p_{t,d}=\P(Y_->0,Y_+>0)
	=\frac14+\frac{1}{2\pi}\arcsin(q_{t,d})>0.
	\]
	
	The conditional vector $(X(t-d),X(t+d)\mid X(t)=x)$ has mean $(c_-x,c_+x)$ and covariance
	\[
	\begin{pmatrix}
		1-c_-^2&c_0-c_-c_+\\
		c_0-c_-c_+&1-c_+^2
	\end{pmatrix}.
	\]
	Both conditional marginal variances are positive, and their
	standardized correlation is $\xi_{t,d}\in[-1,1]$.  On defining
	\[
	Z_1 =\frac{X(t-d)-c_-x}{\sqrt{1-c_-^2}},\qquad
	Z_2 =\frac{X(t+d)-c_+x}{\sqrt{1-c_+^2}},
	\]
	the conditional vector $(Z_1,Z_2 \mid X(t)=x)$ is standard bivariate Gaussian with
	correlation $\xi_{t,d}$.  
	Hence
	\begin{equation}\label{eq:A-cond}
		\begin{split}
	\P(\mathcal A_{t,d}\mid X(t)=x) &= \P(X(t-d)<x, X(t+d)<x \mid X(t)=x)\\
	&= \P(Z_1<b_{t,d}^{-}x, Z_2<b_{t,d}^{+}x \mid X(t)=x)\\
	&=\Phi_2\!\left(b_{t,d}^{-}x,b_{t,d}^{+}x;\xi_{t,d}\right).
	\end{split}
	\end{equation}
	This argument remains valid when the conditional vector is singular. It follows that
	\begin{align*}
		H_{t,d}(u)
		=\frac{1}{p_{t,d}}\P\{\mathcal A_{t,d}, X(t)\le u\}= \frac{1}{p_{t,d}}\int_{-\infty}^u \P\{\mathcal A_{t,d}\mid  X(t)=x\}\phi(x)\dd x.
	\end{align*}
	The integrand is continuous, including at $\xi_{t,d}=\pm1$. Plugging \eqref{eq:A-cond} and taking differentiation proves
	\eqref{eq:h-exact}.
\end{proof}

	Define the joint height and peak-event density by
	\begin{equation}\label{eq:g-exact}
		g_{t,d}(x)=\phi(x)\Phi_2(b_{t,d}^-x,b_{t,d}^+x;\xi_{t,d})
		=\phi(x)\P(\mathcal A_{t,d}\mid X(t)=x).
	\end{equation}
	Then
	\begin{equation}\label{eq:joint-density}
		\P\{X(t)\in\dd x,\mathcal A_{t,d}\}=g_{t,d}(x)\dd x,
		\qquad \int_{\R}g_{t,d}(x)\dd x=p_{t,d}.
	\end{equation}

\begin{remark}
	\label{remark:non-degenerate}
	The positive definiteness of the covariance matrix of
	$(X(t-d),X(t),X(t+d))$ is sufficient for
	the assumption in Theorem \ref{thm:exact}, but is not necessary. Singular covariance matrices are permitted with the convention \eqref{eq:singular-cdf}.
\end{remark}	

\begin{remark}[Equivalent sign convention]
	\label{remark:gamma}
	If one uses
	\[
	\gamma_{t,d}
	=\Corr\{X(t)-X(t-d),X(t+d)-X(t)\}=-q_{t,d},
	\]
	then $p_{t,d}=\arccos(\gamma_{t,d})/(2\pi)$.  The convention used in \eqref{A-def}
	keeps the local maximum event in the positive orthant.
\end{remark}

\subsection{Expected counts of peaks}

Define the numbers of all interior grid peaks and of peaks above $u$ by
\begin{align*}
	M_d(I)=\sum_{t\in\mathcal G_d^\circ}\1_{\mathcal A_{t,d}},\qquad
	M_{u,d}(I)=\sum_{t\in\mathcal G_d^\circ}
	\1_{\mathcal A_{t,d}}\1_{\{X(t)>u\}}.
\end{align*}
Linearity of expectation, together with \eqref{eq:g-exact} and \eqref{eq:joint-density}, yields
\begin{equation}\label{eq:EMd}
	\E M_d(I)
	=\sum_{t\in\mathcal G_d^\circ}p_{t,d},\qquad
	\E M_{u,d}(I)
	=\sum_{t\in\mathcal G_d^\circ}
	\int_u^\infty g_{t,d}(x)\dd x.
\end{equation}
No independence between sites is required. The bivariate CDF expressions
apply whenever $-1<c_\pm<1$ at every included site. The quantity $p_{t,d}$ in \eqref{eq:h-exact} is a sitewise probability, rather than a count intensity per unit length; its length-normalized counterpart is $p_{t,d}/d$. 

\subsection{Stationary specialization}

Suppose now that $X$ is stationary with correlation
$r(s)=\E\{X(t)X(t+s)\}$.  Let
\begin{equation}\label{eq:q-xi-stationary}
	q_d=\frac{1-2r(d)+r(2d)}{2\{1-r(d)\}},\quad
	\xi_d=\frac{r(2d)-r(d)^2}{1-r(d)^2},\quad
	b_d=\sqrt{\frac{1-r(d)}{1+r(d)}}.
\end{equation}

\begin{corollary}[Exact stationary peak law]\label{cor:stationary-exact}
	Suppose $r(d)\in (-1,1)$.  At every interior site, the peak height density is given by
	\begin{align}\label{eq:h-stationary}
		h_d(x)=\frac{\phi(x)\Phi_2(b_dx,b_dx;\xi_d)}{p_d}, \quad p_d=\frac14+\frac{1}{2\pi}\arcsin(q_d),\quad x\in \R.
	\end{align}
	If the grid has $n_d$ interior sites, then
	\begin{align}\label{eq:EMd-stationary}
		\E M_d&=n_dp_d,\quad
		\E M_{u,d}=n_d\int_u^\infty
		\phi(x)\Phi_2(b_dx,b_dx;\xi_d)\dd x.
	\end{align}
\end{corollary}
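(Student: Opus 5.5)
The plan is to obtain Corollary~\ref{cor:stationary-exact} by specializing Theorem~\ref{thm:exact} and the count formulas \eqref{eq:EMd}; no new probabilistic argument should be needed. Under stationarity with unit variance, $C(s,t)=r(s-t)$ and $r$ is even. At any interior site $t$ this gives $c_-=C(t,t-d)=r(d)$, $c_+=C(t,t+d)=r(-d)=r(d)$, and $c_0=C(t-d,t+d)=r(2d)$. The hypothesis $r(d)\in(-1,1)$ is then exactly the hypothesis $c_\pm\in(-1,1)$ of Theorem~\ref{thm:exact}, so that theorem applies at every interior site.

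Next I substitute these values into \eqref{eq:q-general} and \eqref{eq:xi-general}. With $c_-=c_+=r(d)$, the denominator of $q_{t,d}$ becomes $2\{1-r(d)\}$ and the numerator becomes $1-2r(d)+r(2d)$, which is $q_d$. The denominator of $\xi_{t,d}$ becomes $\sqrt{(1-r(d)^2)^2}=1-r(d)^2$. This equals $|1-r(d)^2|$, and positivity follows from $|r(d)|<1$, so $\xi_{t,d}=\xi_d$. Both $b^{\pm}_{t,d}$ reduce to $b_d$. Inserting these into \eqref{eq:h-exact} gives \eqref{eq:h-stationary}, including the expression for $p_d$ and its positivity. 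The site index can be dropped because none of the quantities depend on $t$.

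For the counts, I apply \eqref{eq:EMd}. Since every interior site has the same $p_{t,d}=p_d$ and the same joint density $g_{t,d}(x)=\phi(x)\Phi_2(b_dx,b_dx;\xi_d)$ from \eqref{eq:g-exact}, each sum consists of $n_d$ identical terms. This yields \eqref{eq:EMd-stationary}.

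There is no real obstacle. The only point needing care is checking the sign in the square-root simplification for $\xi_d$ (handled by $|r(d)|<1$) and the evenness of $r$ used to identify $c_+$ with $r(d)$. The singular cases $\xi_d=\pm1$ are already covered by the convention \eqref{eq:singular-cdf}, as in Theorem~\ref{thm:exact}.
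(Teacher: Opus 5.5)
Your proposal is correct and matches the paper's proof: substitute $c_\pm=r(d)$ and $c_0=r(2d)$ into \eqref{eq:q-general} and \eqref{eq:xi-general} to get $q_d$, $\xi_d$, $b_d$, apply Theorem~\ref{thm:exact} at each interior site, and sum the $n_d$ identical terms in \eqref{eq:EMd}. Your extra checks, namely evenness of $r$ and positivity of the denominators from $|r(d)|<1$, are details the paper leaves implicit.
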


\begin{proof}
	Stationarity gives $c_-=c_+=r(d)$ and $c_0=r(2d)$.
	Substitution in \eqref{eq:q-general} and \eqref{eq:xi-general} yields $q_{t,d}=q_d$, $\xi_{t,d}=\xi_d$ and $b_{t,d}^{-}=b_{t,d}^{+}
	=b_d$. Theorem~\ref{thm:exact} therefore gives \eqref{eq:h-stationary}, independently of the interior site $t$. Finally, \eqref{eq:EMd-stationary} follows from \eqref{eq:EMd} with $n_d=|\mathcal G_d^\circ|$.
\end{proof}

\begin{example}[Independent Gaussian observations]\label{ex:iid}
	For an i.i.d. standard Gaussian sequence, $r(d)=r(2d)=0$ at distinct grid
	sites.  Hence $p_d=1/3$ and $h_d(x)=3\phi(x)\Phi(x)^2$.
	This is the density of the maximum of three independent standard Gaussian
	variables. This model is not mean-square continuous and lies outside the infill classification below.
\end{example}

\section{The Infill Limit for Smooth Covariance}\label{sec:smooth}

\subsection{The smooth covariance target and its Palm interpretation}

Fix an interior point $t$.  Assume that $C$ has continuous partial
derivatives through total order four in a neighborhood of $(t,t)$. Then $X$
has the first and second mean-square derivatives there.  Write
\begin{equation}\label{eq:lambdas}
	\begin{split}
	\lambda_1(t)&=\Var\{X'(t)\}>0,\qquad
	\lambda_2(t)=\Var\{X''(t)\},\\
	a(t)&=\Cov\{X'(t),X''(t)\}=\lambda_1'(t)/2,\\
	\delta_t^2
	&=\Var\{X''(t)\mid X'(t)\}=\lambda_2(t)-a(t)^2/\lambda_1(t).
\end{split}
\end{equation}
The unit-variance condition $C(t,t)=1$ implies
$\Cov\{X(t),X'(t)\}=0$ and
$\Cov\{X(t),X''(t)\}=-\lambda_1(t)$. Consequently,
\begin{align}
	\tau_t^2 &=\Var\{X''(t)\mid X(t),X'(t)\}=\delta_t^2-\lambda_1(t)^2\ge 0,
	\label{eq:tau-conditional}\\
	\eta_t &= -{\rm Corr}\{X(t), X''(t) \mid X'(t)\} = \lambda_1(t)/\delta_t\in (0, 1].
	\label{eq:rho}
\end{align}
In particular, the degenerate case $\eta_t=1$ is equivalent to $\tau_t=0$.

Recall $(y)_+=\max\{y,0\}$. By \cite{Cheng25}, for a smooth process, its
peak height density is the ratio of intensities $J_t(x)$ and $J_t$:
\begin{equation}\label{eq:palm-definition}
	h_{\eta_t}^{\mathrm{SM}}(x)
	=J_t(x)/J_t,
\end{equation}
where
\begin{equation}\label{eq:J_tx}
	\begin{split}
	J_t(x)
	&=f_{X(t),X'(t)}(x,0)
	\E\!\left[(-X''(t))_+\mid X(t)=x,X'(t)=0\right]
	\\
	&= \begin{cases*}
		\frac{\delta_t\phi(x)}{\sqrt{2\pi\lambda_1(t)}}\left\{\sqrt{1-\eta_t^2}\,
		\phi\!\left(\frac{\eta_tx}{\sqrt{1-\eta_t^2}}\right)
		+\eta_t x
		\Phi\!\left(\frac{\eta_t x}{\sqrt{1-\eta_t^2}}\right)\right\},& \quad $0<\eta_t<1$,\\
		\frac{\sqrt{\lambda_1(t)}}{2\pi}
		x e^{-x^2/2}\1_{\{x>0\}},& \quad $\eta_t=1$,
	\end{cases*}
	\end{split}
\end{equation}
and 
\begin{align}\label{eq:J_t}
	J_t=f_{X'(t)}(0)
	\E\!\left[(-X''(t))_+\mid X'(t)=0\right]=\frac{\delta_t}{2\pi\sqrt{\lambda_1(t)}}>0.
\end{align}
Integrating $J_t(x)$ over $x$ gives $J_t$, thus the ratio in
\eqref{eq:palm-definition} is a probability density. When $\eta_t=1$, $\delta_t=\lambda_1(t)$ and hence $J_t=\sqrt{\lambda_1(t)}/(2\pi)$. If an a.s. $C^2$ version and the standard Kac--Rice regularity assumptions
hold, then $J_t$ is the local intensity of continuous local maxima and $h_{\eta_t}^{\mathrm{SM}}$ is the local Palm height density
\citep{AT07,AzaisWschebor09,Cheng25}. The discrete infill limits below require only the stated covariance assumptions.

\subsection{A uniform bivariate Gaussian scaling lemma}

The following lemma makes the infill argument uniform even when the
conditional variance $\tau_t^2=0$. As in Section~\ref{sec:exact},
$\Phi_2$ includes singular Gaussian laws at correlation $\xi_{t,d}=-1$.

\begin{lemma}[Uniform bivariate Gaussian scaling limit]\label{lem:antipodal-uniform}
	Let $\Theta$ be an index set. For $\theta\in\Theta$, suppose
	$A_{\theta,d}$, $B_{\theta,d}\in\R$ and
	$\zeta_{\theta,d}\in[-1,1]$ satisfy, uniformly in $\theta$,
	\[
	\frac{A_{\theta,d}}d\to A_\theta,\qquad
	\frac{B_{\theta,d}}d\to B_\theta,\qquad
	\frac{1+\zeta_{\theta,d}}{d^2}\to\beta_\theta,
	\]
	where $A_\theta,B_\theta\in \R$ and $\beta_\theta\geq0$ are uniformly bounded.
	Then
	\begin{equation}\label{eq:near-antipodal-limit}
		\sup_{\theta\in\Theta}\left|
		\frac{\Phi_2(A_{\theta,d},B_{\theta,d};\zeta_{\theta,d})}{d}
		-\phi(0)\E\!\left[(A_\theta+B_\theta
		-\sqrt{2\beta_\theta}\,U)_+\right]\right|\to 0,
	\end{equation}
	where $U\sim N(0,1)$. Moreover, if $|A_{\theta,d}|/d+|B_{\theta,d}|/d\leq L$ and
	$(1+\zeta_{\theta,d})/d^2\leq K$ for all sufficiently small $d$, then
	$\Phi_2(A_{\theta,d},B_{\theta,d};\zeta_{\theta,d})/d\le c(L+\sqrt K)$,
	where $c>0$ is a constant independent of $\theta$ and $d$.
\end{lemma}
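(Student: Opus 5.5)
We use a representation of a near-antipodal standard bivariate Gaussian pair. Let $U,W$ be independent $N(0,1)$ and set $s=\sqrt{(1+\zeta)/2}$, $c=\sqrt{(1-\zeta)/2}$, so that $s^2+c^2=1$. Then
\[
(Z_1,Z_2)=(cW+sU,\,-cW+sU)
\]
has unit variances and correlation $s^2-c^2=\zeta$. This holds also at $\zeta=-1$, where $s=0$, consistent with \eqref{eq:singular-cdf}. Hence
\[
\Phi_2(A,B;\zeta)=\P\{-B+sU< cW< A-sU\}
=\E\big[\{\Phi((A-sU)/c)-\Phi((-B+sU)/c)\}_+\big],
\]
where we drop the index $(\theta,d)$. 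The positive part appears because the interval may be empty. In our regime $c\to1$ uniformly, since $1-\zeta\ge 2-Kd^2$ for small $d$.

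The bound in the second claim follows directly. The interval for $cW$ has length $(A+B-2sU)_+\le |A|+|B|+2s|U|$. Since the density of $cW$ is at most $\phi(0)/c$,
\[
\Phi_2\le \frac{\phi(0)}{c}\,\E\big(|A|+|B|+2s|U|\big)\le \frac{\phi(0)}{c}\,d\Big(L+\sqrt{2K}\,\E|U|\Big).
\]
Here we used $s\le d\sqrt{K/2}$. With $c\ge 1/2$ for small $d$, this gives $\Phi_2/d\le c'(L+\sqrt K)$.

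For the limit, write $A=da$, $B=db$, $s=d\sigma$, with $a\to A_\theta$, $b\to B_\theta$, $\sigma\to\sqrt{\beta_\theta/2}$ uniformly. Then
\[
\frac{\Phi_2}{d}=\E\Big[\frac1d\int \1\{-db+d\sigma U< cw< da-d\sigma U\}\,\phi(w)\dd w\Big].
\]
The inner integral is over an interval of length $d(a+b-2\sigma U)_+/c$ located within distance $O(d(1+|U|))$ of $0$. By the mean value theorem, it equals $\phi(0)\,d(a+b-2\sigma U)_+/c$ plus an error at most
\[
d(a+b-2\sigma U)_+\cdot \sup_{|w|\le Cd(1+|U|)}|\phi(w)-\phi(0)|\le C'd^2(1+|U|)^2.
\]
Here $|\phi'|\le1$, and the bounds are uniform in $\theta$ by the uniform bounds on $A_\theta$, $B_\theta$, $\beta_\theta$. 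After dividing by $d$ and taking expectations, the error is $O(d)$.

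The main term is $\phi(0)\E(a+b-2\sigma U)_+/c$. The map $(a,b,\sigma)\mapsto\E(a+b-2\sigma U)_+$ is Lipschitz, since $y\mapsto y_+$ is $1$-Lipschitz, and $c\to1$ uniformly. So this term converges uniformly to $\phi(0)\E(A_\theta+B_\theta-\sqrt{2\beta_\theta}U)_+$, using $2\sigma\to\sqrt{2\beta_\theta}$. This proves \eqref{eq:near-antipodal-limit}.

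The only delicate points are keeping every estimate uniform in $\theta$ and handling the degenerate case $s=0$. The representation above treats the degenerate case without separate argument, and uniformity follows because every constant depends only on the uniform bounds for $A_\theta$, $B_\theta$, $\beta_\theta$ and on the uniform rates of convergence.
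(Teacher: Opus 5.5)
Your proof is correct and follows essentially the same route as the paper. It uses the same decomposition into independent sum and difference coordinates (your $sU$ and $cW$ are the paper's $S=d\kappa U$ and $T=\nu V$), the same interval-length bound with density at most $\phi(0)/c$, and the same Lipschitz step for $y\mapsto y_+$. The only minor difference is that you separate the factor $1/c$ and use $|\phi'|\le 1$ to get an explicit $O(d)$ localization error, whereas the paper applies dominated convergence to $\sup_{|z|\le M(1+|U|)}|f_\nu(dz)-\phi(0)|$.
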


\begin{proof}
	Suppress $\theta$ temporarily and let $(Z_1,Z_2)$ be standard bivariate
	Gaussian with correlation $\zeta_d$. The independent Gaussian coordinates
	\[
	S=(Z_1+Z_2)/2=d\kappa U,\qquad
	T=(Z_1-Z_2)/2=\nu V,
	\]
	have
	$\kappa=\sqrt{(1+\zeta_d)/(2d^2)}$ and
	$\nu=\sqrt{(1-\zeta_d)/2}$, where $U,V$ are independent standard
	Gaussian. The assumptions give $\nu\to1$ uniformly, so $\nu$ is bounded
	away from zero for small $d$. This representation also holds when
	$\kappa=0$. Let $a=A_d/d$, $b=B_d/d$ and
	$f_\nu(y)=\nu^{-1}\phi(y/\nu)$. The two inequalities $Z_1\le A_d$ and $Z_2\le B_d$ defining
	$\Phi_2$ are equivalent to
	$d(\kappa U-b)\leq T\leq d(a-\kappa U)$. Using the restriction $\kappa U-b<a-\kappa U$ and making the substitution $y=dz$, we obtain
	\begin{equation}\label{eq:antipodal-interval}
		\frac{\Phi_2(A_d,B_d;\zeta_d)}d
		=\E\!\left[
		\1_{\{2\kappa U<a+b\}}
		\int_{\kappa U-b}^{a-\kappa U}f_\nu(dz)\dd z
		\right].
	\end{equation}
	Endpoint equalities do not affect the integral. Its interval length is
	$\ell=(a+b-2\kappa U)_+$, which is bounded by
	$|a|+|b|+2\kappa|U|$. This proves the asserted probability bound, since
	$\sup_y f_\nu(y)=\phi(0)/\nu$.
	
	For the convergence, boundedness gives a constant $M$ such that
	$|a|,|b|,\kappa\leq M$ uniformly. Every integration point in
	\eqref{eq:antipodal-interval} has $|z|\leq M(1+|U|)$. Thus the
	absolute difference between that expression and $\phi(0)\E\ell$ is
	bounded by
	\[
	\E\!\left[
	2M(1+|U|)\sup_{|z|\leq M(1+|U|)}
	|f_\nu(dz)-\phi(0)|\right].
	\]
	The supremum tends to zero for each fixed $U$, uniformly in $\theta$,
	and is bounded by a constant. The integrable envelope
	$M'(1+|U|)$ for some constant $M'$ therefore proves convergence uniformly in $\theta$. Finally, $y\mapsto(y)_+$ is Lipschitz with constant one, and
	\[
	\left|\E(a+b-2\kappa U)_+
	-\E(A_\theta+B_\theta-\sqrt{2\beta_\theta}\,U)_+\right|
	\leq|a-A_\theta|+|b-B_\theta|
	+|2\kappa-\sqrt{2\beta_\theta}|\,\E|U|.
	\]
	The right-hand side tends to zero uniformly; convergence of the square
	roots holds also at $\beta_\theta=0$. This proves the lemma.
\end{proof}

\subsection{Convergence of probabilities, densities and counts}

\begin{theorem}[Smooth infill limit]\label{thm:smooth-limit}
	Suppose $C$ has continuous partial derivatives through total order four
	in a neighborhood of $(t,t)$, and $\lambda_1(t)>0$. Then, for a fixed interior point $t$, as $d\downarrow0$, $p_{t,d}/d\to J_t$ and $g_{t,d}(x)/d \to J_t(x)$ for every $x\in\R$.
	Consequently, $h_{t,d}(x)$ converges pointwise and in $L^1(\R)$ to the smooth peak height density
	\begin{equation}\label{eq:h-smooth-rho}
			h_{\eta_t}^{\mathrm{SM}}(x)
			=\begin{cases}
			\sqrt{1-\eta_t^2}\,
			\phi\!\left(\frac{x}{\sqrt{1-\eta_t^2}}\right)
			+\sqrt{2\pi}\,\eta_t x\phi(x)
			\Phi\!\left(\frac{\eta_t x}{\sqrt{1-\eta_t^2}}\right),& \quad 0<\eta_t<1,\\
			x e^{-x^2/2}\1_{\{x>0\}}, &\quad \eta_t=1,
		    \end{cases}
	\end{equation}
	and thus the discrete peak height distributions converge in total variation.
\end{theorem}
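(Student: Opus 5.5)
The plan is to derive both limits from the exact formula of Theorem~\ref{thm:exact} by applying Lemma~\ref{lem:antipodal-uniform}, with the index set being a compact range of $x$, or the single point $\theta=x$. By \eqref{eq:g-exact}, $g_{t,d}(x)/d=\phi(x)\Phi_2(b^-_{t,d}x,b^+_{t,d}x;\xi_{t,d})/d$. So it suffices to find the small-$d$ behaviour of $b^\pm_{t,d}$ and of $1+\xi_{t,d}$.

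First I will Taylor expand $C$ around $(t,t)$, using the continuous derivatives through order four. Writing $X(t\pm d)=X\pm dX'+\tfrac{d^2}{2}X''+o(d^2)$ in mean square, I get the following.
\begin{enumerate}
\item $1-c_\pm=\tfrac{d^2}{2}\lambda_1(t)+O(d^3)$, so $b^\pm_{t,d}/d\to\sqrt{\lambda_1(t)}/2$. Hence $A=B=x\sqrt{\lambda_1}/2$ in the lemma.
\item $\xi_{t,d}$ is the conditional correlation of $X(t-d)$ and $X(t+d)$ given $X(t)$. The conditional residuals are
\[
\mp dX'+\tfrac{d^2}{2}(X''+\lambda_1X)+o(d^2),
\]
where $X''+\lambda_1 X$ is the residual of $X''$ given $X$ and $\Cov(X,X')=0$.
\item The normalized sum of the residuals has variance of order $d^4$. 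This gives
\[
\frac{1+\xi_{t,d}}{d^2}\to \beta_t=\frac{\Var\{X''\mid X,X'\}}{4\lambda_1}=\frac{\tau_t^2}{4\lambda_1(t)}.
\]
Here $\tau_t^2$ comes from \eqref{eq:tau-conditional} once the $X'$ component is projected out. The covariance between $X''$ and $X'$ produces cross terms through $a(t)$, and these should cancel at leading order.
\end{enumerate}
The lemma then gives
\[
g_{t,d}(x)/d\to\phi(x)\phi(0)\,\E\bigl(x\sqrt{\lambda_1}-\tau_t U/\sqrt{2\lambda_1}\bigr)_+ .
\]

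Next I will match this with \eqref{eq:J_tx}. Conditional on $X=x$ and $X'=0$, $-X''$ has mean $\lambda_1 x$ and standard deviation $\tau_t$. Also $f_{X,X'}(x,0)=\phi(x)/\sqrt{2\pi\lambda_1}$. The powers of $\sqrt2$ and $\lambda_1$ in the scaling must be checked carefully, including the normalization $b\approx d\sqrt{\lambda_1}/2$ against the conditional standard deviations $\sqrt{1-c_\pm^2}$. I expect the identity to reduce to $J_t(x)=f_{X,X'}(x,0)\E[(\lambda_1x+\tau_tU)_+]$, which is the form in \eqref{eq:J_tx}. The case $\eta_t=1$, i.e.\ $\tau_t=0$, is covered because the lemma allows $\beta=0$.

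For the peak probability, I have two routes. One is to expand $q_{t,d}$ directly:
\[
1+q_{t,d}=\frac{2(1-c_-)(1-c_+)+\ldots}{\ldots},
\]
which gives $1+q_{t,d}\sim d^2\delta_t^2/(2\lambda_1)$, and then
\[
p_{t,d}=\tfrac14+\tfrac1{2\pi}\arcsin q\approx \frac{\sqrt{2(1+q)}}{2\pi}
\]
yields $\delta_t d/(2\pi\sqrt{\lambda_1})=dJ_t$. Alternatively, I can integrate $g_{t,d}/d$ and use dominated convergence. The bound in the lemma gives $\Phi_2/d\le c(L|x|+\sqrt K)$, which is dominated by $c'(1+|x|)\phi(x)$ and is integrable. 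So $p_{t,d}/d\to\int J_t(x)\dd x=J_t$.

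Finally, $h_{t,d}=(g_{t,d}/d)/(p_{t,d}/d)\to J_t(x)/J_t$ pointwise. The same envelope, or Scheffé's lemma, upgrades this to $L^1$ convergence and hence total variation. The algebra $J_t(x)/J_t=h^{\mathrm{SM}}_{\eta_t}$ is a rewriting using $\eta_t=\lambda_1/\delta_t$.

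The main obstacle is the second-order expansion of $1+\xi_{t,d}$. It is a difference of quantities of order $d^2$ whose leading terms cancel, so it needs covariance expansions to fourth order with $o(d^4)$ remainders that are controlled only by continuity of the fourth derivatives. My approach is to write $c_0-c_-c_+$ and $1-c_\pm^2$ via Taylor's theorem with integral remainder. The covariance of residuals of increments is identified through mixed partials $\partial_s^i\partial_u^jC$ with $i+j\le4$. The nonstationary asymmetry, $\lambda_1'$ and $a(t)$, must then be shown to drop out of $\beta_t$.
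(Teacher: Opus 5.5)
Your route is the paper's: the exact formula \eqref{eq:g-exact}, Lemma~\ref{lem:antipodal-uniform} with $A=B=x\sqrt{\lambda_1}/2$, then the ratio and Scheff\'e. However, the constant the argument hinges on is wrong, and with it the identification with $J_t(x)$, which you defer with ``I expect'', fails.

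\textbf{The error.} The correct limit is $(1+\xi_{t,d})/d^2\to\beta_t=\tau_t^2/(2\lambda_1)$, not $\tau_t^2/(4\lambda_1)$. A quick check: for $r(s)=e^{-s^2/2}$ one has $\xi_d=-e^{-d^2}$ exactly. Hence $(1+\xi_d)/d^2\to1=\tau^2/(2\lambda_1)$, since $\lambda_1=1$ and $\tau^2=2$.

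\textbf{Correct computation in your framework.} Write $1+\xi_{t,d}=\tfrac12\Var(Z_1+Z_2)$, with $Z_1=R_-/s_-$, $Z_2=R_+/s_+$ and $s_\pm^2=1-c_\pm^2=\lambda_1d^2\pm a\,d^3+O(d^4)$. Then $d/s_\pm=\lambda_1^{-1/2}\{1\mp a d/(2\lambda_1)+O(d^2)\}$, and
\[
Z_1+Z_2=\frac{d}{\sqrt{\lambda_1}}\Bigl(X''+\lambda_1X-\frac{a}{\lambda_1}X'\Bigr)+o(d)\qquad\text{in }L^2 .
\]
This gives $1+\xi_{t,d}=\tau_t^2d^2/(2\lambda_1)+o(d^2)$. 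Hence $\sqrt{2\beta_t}=\tau_t/\sqrt{\lambda_1}$, and the lemma yields
\[
\phi(x)\phi(0)\E\bigl(\sqrt{\lambda_1}x-\tau_tU/\sqrt{\lambda_1}\bigr)_+=f_{X,X'}(x,0)\E(\lambda_1x+\tau_tU)_+=J_t(x).
\]

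\textbf{What your constant would give.} With your $\beta_t$ you would instead obtain $f_{X,X'}(x,0)\E(\lambda_1x+\tau_tU/\sqrt2)_+$, which differs from $J_t(x)$ whenever $\tau_t>0$. Integrating it, as in your second route for $p_{t,d}$, would give $\sqrt{\lambda_1^2+\tau_t^2/2}/(2\pi\sqrt{\lambda_1})$ rather than $J_t=\delta_t/(2\pi\sqrt{\lambda_1})$. That contradicts your first route.

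\textbf{The role of $a(t)$.} The display also shows that your expectation that the asymmetry through $a(t)$ ``drops out'' is mistaken. The $\pm a\,d^3$ corrections in the normalizers $s_\pm$ multiply the leading terms $\pm dX'$ with opposite signs, so in $Z_1+Z_2$ they add rather than cancel. They produce exactly the projection $-\frac{a}{\lambda_1}X'$, which turns $\Var(X''+\lambda_1X)=\lambda_2-\lambda_1^2$ into $\tau_t^2$. Keeping only $1-c_\pm=\tfrac{\lambda_1}{2}d^2+O(d^3)$, as in your first item, is enough for $b^\pm_{t,d}$ but not for $\xi_{t,d}$. If you normalized both residuals by $d\sqrt{\lambda_1}$, you would get the wrong answer whenever $a(t)\neq0$.

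\textbf{Once corrected.} With these fixes your argument closes, and it is somewhat more economical than the paper's in two ways:
\begin{enumerate}
\item The $L^2$ residual expansion replaces the paper's brute-force fourth-order expansions of $c_0-c_-c_+$ and $\sqrt{(1-c_-^2)(1-c_+^2)}$, together with the diagonal identities \eqref{eq:diagonal-derivative-identities}.
\item Deriving $p_{t,d}/d\to J_t$ by dominated convergence with the envelope $c'(1+|x|)\phi(x)$ removes the need for the separate expansion \eqref{eq:smooth-gamma-expansion}. The paper uses this envelope only in Corollary~\ref{cor:smooth-counts}.
\end{enumerate}
The paper's direct expansion of $\gamma_{t,d}$ still has value as an independent check, and it would have caught the factor of two.
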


\begin{proof}
	(i) We first prove the nondegenerate case $0<\eta_t<1$ (i.e., $\tau>0$). For brevity, suppress the argument $t$ and write
	$\lambda_1=\lambda_1(t)$, $\lambda_2=\lambda_2(t)$,
	$a=a(t)$, $\delta=\delta_t$, and $\tau=\tau_t$.  For $i,j\geq0$, let
	\[
	C_{ij}=\left.
	\frac{\partial^{i+j}}{\partial s^i\partial v^j}C(s,v)
	\right|_{s=v=t}.
	\]
	Symmetry gives $C_{ij}=C_{ji}$. Differentiating $C(s,s)=1$ along the
	diagonal yields
	\begin{equation}\label{eq:diagonal-derivative-identities}
		C_{01}=0,\qquad C_{02}=-C_{11},\qquad
		C_{03}=-3C_{12},\qquad
		C_{04}=-4C_{13}-3C_{22}.
	\end{equation}
	By \eqref{eq:lambdas} and \eqref{eq:tau-conditional},
	$C_{11}=\lambda_1$, $C_{12}=a$, $C_{22}=\lambda_2$,
	$\delta^2=\lambda_2-a^2/\lambda_1$, and
	$\tau^2=\delta^2-\lambda_1^2$. Fourth-order Taylor expansion of $C$ at $(t,t)$ gives
	\begin{align*}
		c_-&=1-\frac{\lambda_1}{2}d^2
		+\frac{a}{2}d^3+\frac{C_{04}}{24}d^4+o(d^4),\\
		c_+&=1-\frac{\lambda_1}{2}d^2
		-\frac{a}{2}d^3+\frac{C_{04}}{24}d^4+o(d^4),\\
		c_0&=1-2\lambda_1d^2
		+\frac{C_{04}+3\lambda_2-4C_{13}}{12}d^4+o(d^4).
	\end{align*}
	For the peak probability, Remark~\ref{remark:gamma} gives
	$p_{t,d}=(2\pi)^{-1}\arccos\gamma_{t,d}$, where $\gamma_{t,d}
	=-q_{t,d}$,
	\begin{align*}
		c_-+c_+-1-c_0
		&=\lambda_1d^2
		+\frac{4C_{13}-3\lambda_2}{12}d^4+o(d^4),\\
		2\sqrt{(1-c_-)(1-c_+)}
		&=\lambda_1d^2
		\left\{1-
		\left(\frac{C_{04}}{12\lambda_1}
		+\frac{a^2}{2\lambda_1^2}\right)d^2
		+o(d^2)\right\}.
	\end{align*}
	Substitution of \eqref{eq:diagonal-derivative-identities} therefore yields
	\begin{equation}\label{eq:smooth-gamma-expansion}
	\gamma_{t,d}
	=1-\frac{1}{2\lambda_1}
	\left(\lambda_2-\frac{a^2}{\lambda_1}\right)d^2
	+o(d^2)
	=1-\frac{\delta^2}{2\lambda_1}d^2+o(d^2).
	\end{equation}
	Since $\arccos(1-y)\sim\sqrt{2y}$,
	we obtain $p_{t,d}/d\to\delta/(2\pi\sqrt{\lambda_1})=J_t$.
	
	For the conditional Gaussian law, the same expansions give
	\begin{align*}
		c_0-c_-c_+
		&=-\lambda_1d^2
		+\left(\frac{\lambda_2}{4}-\frac{C_{13}}{3}
		-\frac{\lambda_1^2}{4}\right)d^4+o(d^4),\\
		\sqrt{(1-c_-^2)(1-c_+^2)}
		&=\lambda_1d^2
		\left\{1-
		\left(\frac{C_{04}}{12\lambda_1}
		+\frac{\lambda_1}{4}
		+\frac{a^2}{2\lambda_1^2}\right)d^2
		+o(d^2)\right\}.
	\end{align*}
	After another use of \eqref{eq:diagonal-derivative-identities} yields
	\begin{equation}\label{eq:smooth-xi-expansion}
		\xi_{t,d}=-1+\frac{\tau^2}{2\lambda_1}d^2+o(d^2),
		\qquad
		\frac{b_{t,d}^{\pm}}d\longrightarrow\frac{\sqrt{\lambda_1}}2.
	\end{equation}
	Because $1-c_\pm^2=\lambda_1d^2+o(d^2)$, the condition in Theorem~\ref{thm:exact} is satisfied.
	
	Apply Lemma~\ref{lem:antipodal-uniform} with
	$A_d=b_{t,d}^{-}x$, $B_d=b_{t,d}^{+}x$, and
	$\beta=\tau^2/(2\lambda_1)$. Multiplication by $\phi(x)$, together with $\tau=\delta\sqrt{1-\eta_t^2}$ and 
	$\lambda_1/\tau=\eta_t/\sqrt{1-\eta_t^2}$, gives
	\begin{align*}
		\frac{g_{t,d}(x)}d
		&\to\frac{\phi(x)}{\sqrt{2\pi}}
		\E\!\left[\left(\sqrt{\lambda_1}x
		-\frac{\tau}{\sqrt{\lambda_1}}U\right)_+\right]\\
		&=\frac{\phi(x)}{\sqrt{2\pi\lambda_1}}
		\left\{\tau\phi\!\left(\frac{\lambda_1x}{\tau}\right)
		+\lambda_1x\Phi\!\left(\frac{\lambda_1x}{\tau}\right)\right\}
		=J_t(x).
	\end{align*}
	Combining the two limits yields the pointwise convergence to the first case of \eqref{eq:h-smooth-rho},
	\[
	h_{t,d}(x)=\frac{g_{t,d}(x)/d}{p_{t,d}/d}
	\to \frac{J_t(x)}{J_t}
	=h_{\eta_t}^{\mathrm{SM}}(x), \quad x\in \R.
	\]
	Since both $h_{t,d}$ and
	$h_{\eta_t}^{\mathrm{SM}}$ are probability densities, Scheff\'e's theorem gives $L^1$ and total variation convergence
	$\int_{\R}|h_{t,d}(x)-h_{\eta_t}^{\mathrm{SM}}(x)|\dd x\to0$.
	
	(ii) We next prove the degenerate case $\eta_t=1$ (i.e., $\tau=0$). The Taylor calculations above do not use $\tau>0$. Here $\delta=\lambda_1$ and $\tau=0$, so
	\eqref{eq:smooth-gamma-expansion} gives $p_{t,d}/d\to\sqrt{\lambda_1}/(2\pi)$.
	
	Apply Lemma~\ref{lem:antipodal-uniform} with
	$A_d=b_{t,d}^{-}x$, $B_d=b_{t,d}^{+}x$, and
	$\beta=0$. We obtain
	\begin{align*}
		\frac{g_{t,d}(x)}d
		\to\frac{\phi(x)}{\sqrt{2\pi}}
		(\sqrt{\lambda_1}x)_+
		=\frac{\sqrt{\lambda_1}}{2\pi}
		x e^{-x^2/2}\1_{\{x>0\}} = J_t(x), \quad x\in \R.
	\end{align*}
	Taking the ratio of two limits yields \eqref{eq:h-smooth-rho}. Scheff\'e's theorem again proves $L^1$ convergence.
\end{proof}

The endpoint includes the process
$X(t)=Z_1\cos(\omega t)+Z_2\sin(\omega t)$, where $Z_1,Z_2$ are
independent standard Gaussian variables and $\omega\ne0$. Its covariance
is $r(s)=\cos(\omega s)$, $\eta=1$ and the covariance matrix of $(X(t-d), X(t), X(t+d))$ has rank two. For $0<|\omega|d<\pi$, $\xi_d=-1$ exactly and
$p_d=|\omega|d/(2\pi)$. The degenerate representation in Lemma \ref{lem:antipodal-uniform} and Theorem \ref{thm:smooth-limit} therefore covers an actual model, rather than only a limiting case of nonsingular matrices.

\begin{corollary}[Limit of expected counts]
	\label{cor:smooth-counts}
	Suppose that $C$ has continuous partial derivatives through total order
	four on a neighborhood of the diagonal
	$\{(t,t):t\in I\}$, where $I=[a,b]$, and that $\inf_{t\in I}\lambda_1(t)>0$.
	Then, for every fixed $u\in\R$,
	\begin{align}\label{eq:smooth-count-limits}
		\E M_d(I)
		\to \int_a^b J_t\dd t,\qquad
		\E M_{u,d}(I)
		\to
		\int_a^b\int_u^\infty J_t(x)\dd x\dd t.
	\end{align}
\end{corollary}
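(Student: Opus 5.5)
The approach is to write the expected counts in \eqref{eq:EMd} as Riemann sums and upgrade the pointwise limits of Theorem~\ref{thm:smooth-limit} to limits that are uniform in $t\in I$. Explicitly,
\[
\E M_d(I)=\sum_{t\in\mathcal G_d^\circ} d\cdot\frac{p_{t,d}}{d},\qquad
\E M_{u,d}(I)=\sum_{t\in\mathcal G_d^\circ} d\int_u^\infty \frac{g_{t,d}(x)}{d}\dd x.
\]
Since $t\mapsto J_t$ and $t\mapsto\int_u^\infty J_t(x)\dd x$ are continuous on $I$, the Riemann sums $\sum_{t\in\mathcal G_d^\circ} d\,J_t$ converge to $\int_a^b J_t\dd t$. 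This uses $\lambda_1,\lambda_2,a,\delta,\tau$ being continuous (they are built from fourth-order derivatives of $C$ on the diagonal), $\inf\lambda_1>0$, and $\delta_t\ge\lambda_1(t)>0$. The omitted boundary sites contribute at most $O(d)$. It therefore suffices to show
\[
\sup_{t\in\mathcal G_d^\circ}\Bigl|\frac{p_{t,d}}{d}-J_t\Bigr|\to0,\qquad
\sup_{t\in\mathcal G_d^\circ}\Bigl|\int_u^\infty\frac{g_{t,d}(x)}{d}\dd x-\int_u^\infty J_t(x)\dd x\Bigr|\to0.
\]

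\textbf{Uniform Taylor expansions.} The central step is to redo the expansions in the proof of Theorem~\ref{thm:smooth-limit} with remainders $o(d^4)$ that are uniform in $t\in I$. The neighborhood of the diagonal contains a uniform tube, because $I$ is compact. On that tube the fourth-order partial derivatives of $C$ are uniformly continuous. Taylor's theorem with integral remainder then gives $c_\pm$ and $c_0$ with $o(d^4)$ errors uniform in $t$, and the same holds for the diagonal identities \eqref{eq:diagonal-derivative-identities}.

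Since $\lambda_1$ is bounded below and $C_{ij}$ is bounded, the divisions are uniform. This yields, uniformly in $t$,
\[
\gamma_{t,d}=1-\frac{\delta_t^2}{2\lambda_1(t)}d^2+o(d^2),\qquad
\xi_{t,d}=-1+\frac{\tau_t^2}{2\lambda_1(t)}d^2+o(d^2),\qquad
\frac{b^\pm_{t,d}}{d}\to\frac{\sqrt{\lambda_1(t)}}{2}.
\]
It also gives $1-c_\pm^2>0$ for small $d$. Because $\arccos(1-y)/\sqrt{2y}\to1$ as $y\downarrow0$, and $\delta_t^2/(2\lambda_1(t))$ is bounded above and below, the first of these gives uniform convergence of $p_{t,d}/d$.

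\textbf{The tail integrals.} Apply Lemma~\ref{lem:antipodal-uniform} with index set $\Theta=\{(t,x):t\in I,\ |x|\le K\}$, where
\[
A_{\theta,d}=b^-_{t,d}x,\qquad B_{\theta,d}=b^+_{t,d}x,\qquad \zeta_{\theta,d}=\xi_{t,d},\qquad \beta_\theta=\tau_t^2/(2\lambda_1(t)).
\]
All limits are uniformly bounded on $\Theta$. Hence $g_{t,d}(x)/d\to J_t(x)$ uniformly on $\Theta$; this covers the cases $\tau_t=0$ and $\tau_t>0$ simultaneously.

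To handle $|x|>K$, use the bound part of the lemma with $L=C'|x|$ and $K$ fixed. It gives
\[
\frac{g_{t,d}(x)}{d}\le c\,\phi(x)\bigl(C'|x|+C''\bigr)
\]
uniformly in $t$ and small $d$. The right-hand side is integrable, and $J_t(x)$ obeys a similar bound. Choosing $K$ large makes both tails over $|x|>K$ uniformly small, and on $[u,K]$ the uniform convergence applies. Together these give the uniform convergence of the tail integrals.

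\textbf{Main obstacle.} The main difficulty is bookkeeping the uniformity of the $o(d^2)$ terms in $\gamma_{t,d}$ and $\xi_{t,d}$. This requires tracking the constants in the Taylor remainders via the modulus of continuity of the fourth derivatives, and checking that every denominator, such as $\lambda_1 d^2$, is uniformly bounded away from zero after scaling.
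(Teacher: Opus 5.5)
Your proposal is correct and follows essentially the same route as the paper: uniform Taylor remainders from compactness, uniform convergence of $p_{t,d}/d$ via the arccosine asymptotic, Lemma~\ref{lem:antipodal-uniform} for $g_{t,d}/d$ with its probability bound supplying an integrable envelope proportional to $(1+|x|)\phi(x)$, and finally Riemann sums. The only cosmetic difference is how the tail integrals are handled. You apply the lemma jointly in $(t,x)$ on $|x|\le K$ and truncate the tails, whereas the paper applies it for each fixed $x$ with index $t$ and then invokes dominated convergence with the same envelope.
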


\begin{proof}
	Continuity of the fourth derivatives and compactness make every Taylor
	remainder in the proof of Theorem \ref{thm:smooth-limit} uniform over $t\in\mathcal G_d^\circ$. All derivative
	variances are bounded there, and $\lambda_1(t)$ is bounded away from
	zero. Moreover $\delta_t^2\geq\lambda_1(t)^2$, so
	$\delta_t^2/(2\lambda_1(t))$ is bounded away from zero. The expansion
	\eqref{eq:smooth-gamma-expansion} and the arccosine asymptotic give
	\begin{equation}\label{eq:uniform-lim-1}
	\sup_{t\in\mathcal G_d^\circ}
	\left|\frac{p_{t,d}}d-J_t\right|\to 0.
\end{equation}
	For each fixed $x$, apply Lemma~\ref{lem:antipodal-uniform} with index
	$\theta=t$, $A_{t,d}=b_{t,d}^{-}x$, $B_{t,d}=b_{t,d}^{+}x$, and
	$\beta_t=\tau_t^2/(2\lambda_1(t))$. Uniformity of
	\eqref{eq:smooth-xi-expansion} gives all the lemma's hypotheses. Thus
	\[
	\sup_{t\in\mathcal G_d^\circ}
	\left|\frac{g_{t,d}(x)}d-J_t(x)\right|\to 0.
	\]
	
	The probability bound in Lemma~\ref{lem:antipodal-uniform}, together with uniformly bounded
	$b_{t,d}^{\pm}/d$ and $(1+\xi_{t,d})/d^2$, gives
	$g_{t,d}(x)/d\leq K(1+|x|)\phi(x)$ for some constant $K>0$. The representation
	in the first line of \eqref{eq:J_tx} gives the same bound for $J_t(x)$. Hence
	\begin{equation*}
		\frac{g_{t,d}(x)}d+J_t(x)\leq K(1+|x|)\phi(x)
	\end{equation*}
	uniformly in $t,x$ and small $d$. Therefore, by the dominated convergence theorem,
	\begin{equation}\label{eq:uniform-lim-2}
	\sup_{t\in\mathcal G_d^\circ}
	\left|
	\int_u^\infty\frac{g_{t,d}(x)}d\dd x
	-\int_u^\infty J_t(x)\dd x
	\right|
	\to 0.
	\end{equation}
	
	Finally,
	\[
	\E M_d(I)
	=d\sum_{t\in\mathcal G_d^\circ}\frac{p_{t,d}}d,
	\qquad
	\E M_{u,d}(I)
	=d\sum_{t\in\mathcal G_d^\circ}
	\int_u^\infty \frac{g_{t,d}(x)}d \dd x.
	\]
	The functions $J_t$ and $J_t(x)$ are continuous in $t$, including where
	$\tau_t=0$. The envelope also makes
	$t\mapsto\int_u^\infty J_t(x)\dd x$ continuous. Since
	$d|\mathcal G_d^\circ|$ is bounded, the uniform limits \eqref{eq:uniform-lim-1} and \eqref{eq:uniform-lim-2} reduce both
	expressions to their corresponding Riemann sums. This proves the two
	asserted limits of expected counts in \eqref{eq:smooth-count-limits}. 
\end{proof}

The second limit in \eqref{eq:smooth-count-limits} can also be written as $\E M_{u,d}(I)\to
\int_a^bJ_t\overline H_{\eta_t}^{\mathrm{SM}}(u)\dd t$, where $\overline H_{\eta_t}^{\mathrm{SM}}$ is the tail of the smooth peak height distribution. By \cite{Cheng25} or direct integration of
\eqref{eq:h-smooth-rho}, 
\begin{equation*}
	\overline H_{\eta_t}^{\mathrm{SM}}(u)=
	\begin{cases}
		\Psi\left(\frac{u}{\sqrt{1-\eta_t^2}}\right) + \sqrt{2\pi}\eta_t\phi(u)\Phi\left(\frac{\eta_tu}{\sqrt{1-\eta_t^2}}\right)  ,&\quad 0<\eta_t<1,\\
		e^{-\left(u_+\right)^2/2},&\quad \eta_t=1.
	\end{cases}
\end{equation*}

\subsection{Stationary specialization}

Let $X$ be stationary with covariance $C(s,t)=r(t-s)$. Omit the site subscript and write
\[
\lambda_1=-r''(0)>0,\qquad
\lambda_2=r^{(4)}(0),\qquad
\eta=\frac{\lambda_1}{\sqrt{\lambda_2}}\in(0,1].
\]
Stationarity gives $a=0$, $\delta^2=\lambda_2$, and
$J=(2\pi)^{-1}\sqrt{\lambda_2/\lambda_1}$.
Consequently, Theorem~\ref{thm:smooth-limit} yields
the following specialization.
\begin{corollary}[Stationary smooth limit]
	\label{cor:stationary-smooth}
	Suppose that $r$ is $C^4$ in a neighborhood of the origin and
	$\lambda_1>0$. Then, $p_d/d\to J$ and $g_d(x)/d \to J(x)$ for every $x\in\R$.
	Consequently, $h_d(x)$ converges pointwise and in $L^1(\R)$ to the smooth peak height density
	\begin{equation}\label{eq:h-eta}
			 h_\eta^{\mathrm{SM}}(x)
			= \begin{cases}
			\sqrt{1-\eta^2}\,
			\phi\!\left(\frac{x}{\sqrt{1-\eta^2}}\right)
			+\sqrt{2\pi}\,\eta x\phi(x)
			\Phi\!\left(
			\frac{\eta x}{\sqrt{1-\eta^2}}
			\right),& \quad 0<\eta<1,\\
			x e^{-x^2/2}\1_{\{x>0\}} ,& \quad \eta=1,
		\end{cases}
	\end{equation}
	and thus the discrete peak height distributions converge in total variation.
\end{corollary}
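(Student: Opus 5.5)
The plan is to deduce Corollary~\ref{cor:stationary-smooth} directly from Theorem~\ref{thm:smooth-limit} by putting $C(s,v)=r(v-s)$ and checking its hypotheses. If $r$ is $C^4$ near $0$, then $C$ has continuous partial derivatives through total order four near every diagonal point $(t,t)$. This holds because each mixed partial $\partial_s^i\partial_v^j C(s,v)=(-1)^i r^{(i+j)}(v-s)$ is continuous wherever $v-s$ lies in the neighborhood of the origin. Fixing any interior $t$, we get $C_{ij}=(-1)^i r^{(i+j)}(0)$. Hence $\lambda_1(t)=C_{11}=-r''(0)=\lambda_1>0$, $\lambda_2(t)=C_{22}=r^{(4)}(0)=\lambda_2$, and $a(t)=C_{12}=-r^{(3)}(0)$.

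Next, evenness of $r$ gives $r^{(3)}(0)=0$. Alternatively, $a(t)=\lambda_1'(t)/2=0$ because $\lambda_1(t)$ is constant. So $\delta_t^2=\lambda_2$, $\eta_t=\lambda_1/\sqrt{\lambda_2}=\eta$, and \eqref{eq:J_t} becomes $J_t=\sqrt{\lambda_2}/(2\pi\sqrt{\lambda_1})=J$. Likewise $J_t(x)$ in \eqref{eq:J_tx} reduces to a site-free $J(x)$. The range $\eta\in(0,1]$ is inherited from \eqref{eq:rho}, which rests on $\tau_t^2=\lambda_2-\lambda_1^2\ge0$. In the stationary setting this inequality is just Cauchy--Schwarz for $\Cov\{X(t),X''(t)\}=-\lambda_1$.

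By Corollary~\ref{cor:stationary-exact}, $p_{t,d}=p_d$, $g_{t,d}=g_d$ and $h_{t,d}=h_d$ for every interior site. Theorem~\ref{thm:smooth-limit} therefore gives $p_d/d\to J$ and $g_d(x)/d\to J(x)$ for every $x$, together with pointwise and $L^1$ convergence of $h_d$ to $h_{\eta}^{\mathrm{SM}}$. The formula \eqref{eq:h-eta} is \eqref{eq:h-smooth-rho} with $\eta_t$ replaced by $\eta$, and total variation convergence follows from Scheff\'e's theorem as in that proof.

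The theorem also needs $r(d)\in(-1,1)$ for small $d$. This follows from $1-r(d)^2=\lambda_1 d^2+o(d^2)>0$, which was already observed in the proof of the theorem. No step is a genuine obstacle. The only point needing care is confirming that local $C^4$ regularity of $r$ at the origin yields the required joint regularity of $C$ near $(t,t)$, and this is immediate from the difference structure.
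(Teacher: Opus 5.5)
Your proposal is correct and follows the paper's argument: the paper likewise obtains the corollary as a direct specialization of Theorem~\ref{thm:smooth-limit}, using $a=0$, $\delta^2=\lambda_2$, $\eta=\lambda_1/\sqrt{\lambda_2}$ and $J=(2\pi)^{-1}\sqrt{\lambda_2/\lambda_1}$. Your check that $\partial_s^i\partial_v^jC(s,v)=(-1)^i r^{(i+j)}(v-s)$ delivers the required joint $C^4$ regularity, and your Cauchy--Schwarz justification of $\eta\in(0,1]$, simply make explicit details that the paper leaves implicit.
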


\section{Rough Covariance and Gaussian Peak Heights}\label{sec:rough}

We characterize when discrete peak selection becomes asymptotically
neutral with respect to height. We give a necessary and sufficient
criterion for a standard Gaussian height limit. The criterion applies
to general nonstationary covariance and allows the peak probability
to vanish. An explicit finite-grid bound yields uniform
approximations across sites. Under stationarity,
the criterion reduces to a ratio of covariance increments.

We also give a complete spectral classification for a fixed stationary
process. The process is assumed to be nonconstant and mean-square
continuous. A finite fourth spectral moment yields the smooth height
law of Section~\ref{sec:smooth}, whereas an infinite fourth moment
yields the marginal Gaussian law.

\subsection{A nonstationary criterion for Gaussian limit}

At a fixed interior point $t$, let
\begin{equation*}
	v_{t,d}^{\pm}=1-c_\pm.
\end{equation*}
Assume $-1<c_\pm<1$ for all sufficiently small $d$, and define
\begin{equation*}
	Z_{-,d}=\frac{X(t)-X(t-d)}{\sqrt{2v_{t,d}^{-}}},\qquad
	Z_{+,d}=\frac{X(t)-X(t+d)}{\sqrt{2v_{t,d}^{+}}}.
\end{equation*}
These are standard normal with correlation $q_{t,d}$ in
\eqref{eq:q-general}, and
$\mathcal A_{t,d}=\{Z_{-,d}>0,Z_{+,d}>0\}$.
Moreover, $q_{t,d}>-1$: equality would force $Z_{-,d}+Z_{+,d}=0$
almost surely, whereas its covariance with $X(t)$ is
$\sqrt{v_{t,d}^{-}/2}+\sqrt{v_{t,d}^{+}/2}>0$.
Thus $p_{t,d}>0$ by \eqref{eq:h-exact}.

The standardized comparison sum and its height correlation are
\begin{equation}\label{eq:nonstationary-s-eta}
	S_{t,d}=\frac{Z_{-,d}+Z_{+,d}}{\sqrt{2(1+q_{t,d})}},\qquad
	\eta_{t,d}=\Corr\{X(t),S_{t,d}\}
	=\frac{\sqrt{v_{t,d}^{-}}+\sqrt{v_{t,d}^{+}}}
	{2\sqrt{1+q_{t,d}}}.
\end{equation}
Under the smoothness assumptions in Theorem \ref{thm:smooth-limit},
\[
v_{t,d}^{\pm} = \frac{\lambda_1(t)}{2}d^2 + o(d^2), \quad 1+q_{t,d} = \frac{\delta_t^2}{2\lambda_1(t)}d^2 + o(d^2),
\]
yielding that $\eta_{t,d}^2 \to \lambda_1(t)^2/\delta_t^2 = \eta_t^2 \in (0,1]$. Thus the finite-grid parameter agrees asymptotically with the smooth
parameter. Define
\begin{equation}\label{eq:R}
	R_{t,d}=\frac{v_{t,d}^{-}+v_{t,d}^{+}}{1+q_{t,d}}.
\end{equation}
Then
\begin{equation}\label{eq:nonstationary-eta-bounds}
	\frac{R_{t,d}}4\le\eta_{t,d}^2\le\frac{R_{t,d}}2.
\end{equation}
Thus $R_{t,d}\to0$ is equivalent to loss of correlation between height
and the standardized comparison sum. In the smooth regime,
$R_{t,d}\to2\eta_t^2>0$, so this dependence persists.

\begin{theorem}[Nonstationary criterion for Gaussian limit]
	\label{thm:rough-nonstationary}
	Fix an interior point $t$ and assume $-1<c_\pm<1$ for all sufficiently
	small $d$.  The following are equivalent as $d\to 0$:
	\begin{enumerate}[label=\textup{(\roman*)}]
		\item $R_{t,d}\to0$;
		\item $\eta_{t,d}\to0$;
		\item $\mathcal L\{X(t)\mid\mathcal A_{t,d}\}\Rightarrow N(0,1)$;
		\item $h_{t,d}(x)\to\phi(x)$ for every $x\in\R$;
		\item $\int_\R|h_{t,d}(x)-\phi(x)|\dd x\to0$.
	\end{enumerate}
	Moreover, whenever $R_{t,d}\le1/4$,
	\begin{align}
		\left|\frac{h_{t,d}(x)}{\phi(x)}-1\right|
		&\le\left(\frac\pi2+2\sqrt{\frac\pi3}|x|\right)\sqrt{R_{t,d}},
		\qquad x\in\R,\label{eq:rough-density-error}\\
		\int_\R|h_{t,d}(x)-\phi(x)|\dd x
		&\le C_0\sqrt{R_{t,d}},\qquad
		C_0=\frac\pi2+2\sqrt{\frac23}<4.
		\label{eq:rough-L1-error}
	\end{align}
\end{theorem}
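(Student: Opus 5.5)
The plan is to obtain the quantitative bounds \eqref{eq:rough-density-error}--\eqref{eq:rough-L1-error} first; they give (i)$\Rightarrow$(iv),(v) directly. The remaining implications then close the cycle.

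\textbf{Conditional representation.} Write the likelihood ratio as
\[
\frac{h_{t,d}(x)}{\phi(x)}=\frac{\P(\mathcal A_{t,d}\mid X(t)=x)}{p_{t,d}}.
\]
Conditional on $X(t)=x$, the pair $(Z_{-,d},Z_{+,d})$ is Gaussian with means $\mu_\pm=x\sqrt{v^\pm_{t,d}/2}$. Its conditional covariance is the unconditional one minus the rank-one term $(\mu_-/x)(\mu_+/x)$.

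\textbf{The sum direction.} Decompose $\mathcal A_{t,d}$ through $S_{t,d}$ and the orthogonal difference $D=(Z_{-,d}-Z_{+,d})/\sqrt{2(1-q_{t,d})}$. The variable $D$ is uncorrelated with $S_{t,d}$, and its covariance with $X(t)$ is
\[
\frac{\sqrt{v^-}-\sqrt{v^+}}{2\sqrt{1-q}},
\]
which in magnitude is at most $(\sqrt{v^-}+\sqrt{v^+})/(2\sqrt{1-q})$.

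I am less sure about how to handle $D$ cleanly. Conditioning shifts $(S,D)$ by means $x\eta_{t,d}$ and $x\,\Corr(X,D)$, and shrinks the variances by $\eta_{t,d}^2$ and $\Corr(X,D)^2$. Both are $O(R_{t,d})$ once $1-q$ is bounded below.

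The harder case is $q$ near $1$, where $1-q$ may be tiny. There I would instead use directly that conditioning alters $Z_\pm$ by means $\mu_\pm$ with $|\mu_\pm|\le |x|\sqrt{v^\pm/2}$. The key fact is that the density of $S$ at the boundary of the orthant, relative to $p_{t,d}$, is controlled by $1/\sqrt{1+q}$.

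\textbf{Main estimate.} The core step is a Lipschitz bound for $\P$ of the orthant $\{Z_->0,Z_+>0\}$ under a mean shift $m$ and a covariance perturbation. I will write the orthant probability as an integral over $s$ of
\[
\phi(s)\,\P(\,|D|\text{-condition}\mid s),
\]
since the orthant is $\{|D|\sqrt{1-q}<S\sqrt{1+q}\}$, a cone in the $(S,D)$ plane. Its boundary measure, relative to its probability $p_{t,d}=\tfrac14+\tfrac1{2\pi}\arcsin q$, scales like $1/\sqrt{1+q}$.

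A mean shift of size $|x|\sqrt{v^\pm/2}$ in $Z_\pm$ then changes the probability by at most a constant times $|x|\sqrt{(v^-+v^+)/(1+q)}=|x|\sqrt R$. Similarly, the variance deflation, of order $v^\pm$ and hence $O(R(1+q))$, contributes the $x$-free term $(\pi/2)\sqrt R$. The hypothesis $R\le1/4$ keeps the conditional variances comparable to the unconditional ones.

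Tracking constants through the cone-angle derivative, including the Gaussian arcsin formula for angles, should produce $\pi/2$ and $2\sqrt{\pi/3}$. This constant tracking is the main obstacle. My first attempt would be to compute the conditional orthant probability exactly as $\Phi_2$, using Theorem~\ref{thm:exact}, and differentiate it in the mean and correlation parameters.

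\textbf{Integrated bound.} Integrating \eqref{eq:rough-density-error} against $\phi$ and using $\E|X|=\sqrt{2/\pi}$ gives
\[
\frac\pi2+2\sqrt{\frac\pi3}\sqrt{\frac2\pi}=\frac\pi2+2\sqrt{\frac23},
\]
which is \eqref{eq:rough-L1-error}.

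\textbf{Closing the cycle.} The equivalence (i)$\Leftrightarrow$(ii) is \eqref{eq:nonstationary-eta-bounds}. The implications (i)$\Rightarrow$(iv) and (i)$\Rightarrow$(v) follow from the bounds above. Next, (v)$\Rightarrow$(iii) is immediate, and (iv)$\Rightarrow$(v) holds by Scheff\'e's theorem.

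For (iii)$\Rightarrow$(ii), I would use the conditional mean identity
\[
\E[X(t)\mid \mathcal A_{t,d}]=\sqrt{\tfrac{1+q_{t,d}}{2}}\,\eta_{t,d}\,\frac{\E[S\mathbf 1_{\mathcal A}]}{p\sqrt{(1+q)/2}}\cdot(\ldots).
\]
More simply, I would compute it directly as
\[
\E[X\mathbf 1_{\mathcal A}]/p=\frac{\sqrt{v^-/2}\,\E[Z_-\mathbf 1_{\mathcal A}]+\sqrt{v^+/2}\,\E[Z_+\mathbf 1_{\mathcal A}]}{p}.
\]
Here $\E[Z_\pm\mathbf 1_{\mathcal A}]=\phi(0)\Phi(\sqrt{(1-q)/(1+q)})\cdot$const, so this equals $c\,\eta_{t,d}$ with $c$ bounded below uniformly in $q\in(-1,1]$.

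Second moments of $X\mid\mathcal A$ are bounded, since $h_{t,d}\le\phi/p$ and $p$ is bounded below away from $q=-1$. Near $q=-1$, I would instead use uniform integrability via conditional Gaussian tails, which I expect to be manageable. With uniform integrability in hand, weak convergence to $N(0,1)$ forces the conditional mean to $0$, hence $\eta_{t,d}\to0$.
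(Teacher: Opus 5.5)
Your chain of implications matches the paper's, but the two technical cores are not established.

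\textbf{Quantitative bounds.} You defer the constants. The mechanism you propose, a Lipschitz bound for the orthant probability under a change of correlation, fails near correlation $\pm1$, since $\partial_\rho\Phi_2(0,0;\rho)=1/(2\pi\sqrt{1-\rho^2})$. No $(S,D)$ decomposition or case split in $q$ is needed.

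Start from $h_{t,d}(x)/\phi(x)=\Phi_2(b^-x,b^+x;\xi)/p_{t,d}$. With $\rho_\pm=\sqrt{v^\pm/2}$, $D=\sqrt{(1-\rho_-^2)(1-\rho_+^2)}$ and $v=v^-+v^+$, one has $\xi=(q-\rho_-\rho_+)/D$. The condition $R\le1/4$ forces $v\le1/2$ and $D\ge3/4$. Hence $|\xi-q|\le v$ and $(b^-+b^+)^2\le\frac43v$.

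Now write $p_{t,d}=F(1+q)$ and $\Phi_2(0,0;\xi)=F(1+\xi)$, with $F(s)=\arccos(1-s)/(2\pi)$. The uniform H\"older bound $|\arccos u-\arccos w|\le(\pi/\sqrt2)\sqrt{|u-w|}$, together with $F(s)\ge\sqrt{2s}/(2\pi)$, gives the $x$-free term $\frac\pi2\sqrt R$ exactly. The marginal-strip bound $|\Phi_2(b^-x,b^+x;\xi)-\Phi_2(0,0;\xi)|\le\phi(0)|x|(b^-+b^+)$, divided by the same lower bound on $p_{t,d}$, gives $2\sqrt{\pi/3}\,|x|\sqrt R$.

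\textbf{(iii)$\Rightarrow$(ii).} Your displayed identity for $\E[X(t)\1_{\mathcal A}]$ treats the covariances $\rho_\pm$ as regression coefficients. That is valid only when $q=0$; the actual coefficients are $\Sigma^{-1}(\rho_-,\rho_+)^\top$. Your value of $\E[Z_{\pm,d}\1_{\mathcal A}]$ is also wrong: it equals $\phi(0)(1+q)/2$. Inserting the correct value into your identity gives $c\propto(1+q)$, so $c\to0$ as $q\to-1$. Your uniform lower bound therefore fails exactly in the vanishing-$p_{t,d}$ regime the theorem is designed to include.

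The correct computation, by Gaussian integration by parts, is $\E[X(t)\1_{\mathcal A}]=\phi(0)(\rho_-+\rho_+)/2$. Hence $\E\{X(t)\mid\mathcal A_{t,d}\}=\eta_{t,d}\sqrt{\pi/2}\,\sin\vartheta/\vartheta\ge\sqrt{2/\pi}\,\eta_{t,d}$, where $p_{t,d}=\vartheta/\pi$ and $\sin\vartheta=\sqrt{(1+q)/2}$.

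Uniform integrability near $q=-1$ is also left open. The bound $h_{t,d}\le\phi/p_{t,d}$ only controls the second moment by $1/p_{t,d}$. The $(S,D)$ decomposition you introduced is what the paper uses, but here rather than for the bounds. Write $X(t)=\eta S+\kappa T+\sigma G$, with $T$ your $D$. Then $\mathcal A_{t,d}$ is a cone in the $(S,T)$-plane, so conditioning restricts only the angle and leaves the independent Rayleigh radius unchanged. This yields the mean formula above and $\E\{X(t)^2\mid\mathcal A_{t,d}\}\le2(\eta^2+\kappa^2)+\sigma^2\le2$ for all $q\in(-1,1)$. The case $q=1$ is checked directly.
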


\begin{proof}
	Conditions \textup{(i)} and \textup{(ii)} are equivalent by
	\eqref{eq:nonstationary-eta-bounds}.  We prove the quantitative bounds
	first.  Suppress $t,d$, let $v=v^-+v^+$, and write
	\[
	\rho_\pm=\sqrt{v^\pm/2},\qquad
	D=\sqrt{(1-\rho_-^2)(1-\rho_+^2)},\qquad
	\varepsilon=1+q,\qquad e=1+\xi.
	\]
	The parameters in \eqref{eq:xi-general} satisfy
	\begin{equation*}
		b^\pm=\frac{\rho_\pm}{\sqrt{1-\rho_\pm^2}},\qquad
		\xi=\frac{q-\rho_-\rho_+}{D},\qquad
		e=\frac{\varepsilon-(1-D+\rho_-\rho_+)}{D}.
	\end{equation*}
	Since $\varepsilon=1+q\le2$, the restriction $R=v/\varepsilon\le1/4$ gives $v\le1/2$ and $D\ge3/4$. Also
	$1-D\le \rho_-^2 + \rho_+^2 = v/2$, $\rho_-\rho_+\le v/4$ and $\rho_\pm^2\le v/2\le 1/4$.  Therefore
	\begin{equation}\label{eq:rough-parameter-errors}
		|e-\varepsilon|
		=\frac{|q(1-D)-\rho_-\rho_+|}{D}\le v,
		\qquad (b^-+b^+)^2\le\frac43v.
	\end{equation}
	
	Let $F(s)=\arccos(1-s)/(2\pi)$ for $0\le s\le2$.  The orthant identity
	gives $p_{t,d}=F(\varepsilon)$ and $\Phi_2(0,0;\xi)=F(e)$.
	For $s\in[0,2]$,
	\[
	F(s)\ge\frac{\sqrt{2s}}{2\pi},\qquad
	|\arccos u-\arccos w|\le\frac\pi{\sqrt2}\sqrt{|u-w|},
	\quad u,w\in[-1,1].
	\]
	The first bound follows from $1-\cos y\le y^2/2$.
	For the second, write $A=\arccos u$, $B=\arccos w$, and
	$\delta=|A-B|$; then
	$|u-w|\ge2\sin^2(\delta/2)\ge2\delta^2/\pi^2$.
	Consequently,
	\begin{equation}\label{eq:rough-orthant-error}
		\left|\frac{\Phi_2(0,0;\xi)}{p_{t,d}}-1\right| \le \frac{1}{p_{t,d}}\frac{1}{2\pi}\frac{\pi}{\sqrt{2}}\sqrt{|e-\varepsilon|}
		\le\frac\pi2\sqrt{R}.
	\end{equation}
	The change from zero thresholds to $(b^-x,b^+x)$ is contained in the two
	marginal strips.  Since standard normal marginal densities are bounded
	by $\phi(0)$, 
	\[
	|\Phi_2(b^-x,b^+x;\xi)-\Phi_2(0,0;\xi)|
	\le\phi(0)|x|(b^-+b^+).
	\]
	Dividing by $p_{t,d}\ge\sqrt{\varepsilon}/(\pi\sqrt2)$ and using
	\eqref{eq:rough-parameter-errors} bounds this relative error by
	$2\sqrt{\pi/3}|x|\sqrt R$.
	Together with \eqref{eq:rough-orthant-error} and
	\eqref{eq:h-exact}, this proves \eqref{eq:rough-density-error}.
	Integration against $\phi$, using
	$\int_\R|x|\phi(x)\dd x=\sqrt{2/\pi}$, proves
	\eqref{eq:rough-L1-error}.
	
	Under \textup{(i)}, these bounds prove \textup{(iv)} and \textup{(v)}.
	Also, \textup{(iv)} implies \textup{(v)} by Scheff\'e's theorem, and
	\textup{(v)} implies \textup{(iii)}, with
	\begin{equation}\label{eq:rough-TV}
		d_{\mathrm{TV}}\bigl(\mathcal L\{X(t)\mid\mathcal A_{t,d}\},N(0,1)\bigr)
		=\frac12\int_\R|h_{t,d}(x)-\phi(x)|\dd x.
	\end{equation}
	
	Finally, suppose \textup{(iii)} holds.  Suppress $t,d$, write
	$S=S_{t,d}$ and $\eta=\eta_{t,d}$, and first take $q<1$.
	The variable $T=(Z_{-,d}-Z_{+,d})/\sqrt{2(1-q)}$ is standard normal
	and independent of $S$.  The Gaussian conditioning formula gives
	\[
	X(t)=\eta S+\kappa T+\sigma G,\quad
	\kappa=\Corr\{X(t),T\}=\frac{\rho_- - \rho_+}{\sqrt{2(1-q)}},\quad
	\sigma^2=1-\eta^2-\kappa^2\ge0,
	\]
	with $G$ independent standard normal; the formula allows $\sigma=0$.
	The peak event can be written as
	$\mathcal A_{t,d} = \{S>0,\ |T|<(\tan\vartheta)S\}$, where
	\[
	\vartheta= \arctan\sqrt{\frac{1+q}{1-q}} \in(0,\pi/2).
	\]
	In polar coordinates $S=\mathcal R\cos\alpha$ and
	$T=\mathcal R\sin\alpha$, the peak event conditioning restricts the angle $\alpha$ uniform on
	$(-\vartheta,\vartheta)$ and leaves the independent radius $\mathcal R$
	unchanged.  Since $\mathcal R$ has Rayleigh distribution with $\E\mathcal R=\sqrt{\pi/2}$ and
	$\E\mathcal R^2=2$, symmetry and angular integration give
	\begin{equation}\label{eq:mean-bound}
	\E\{X(t)\mid\mathcal A_{t,d}\} = \eta \, \E\mathcal R \, \E\{\cos\alpha \mid |\alpha|<\vartheta\}
	=\eta\sqrt{\frac\pi2}\frac{\sin\vartheta}{\vartheta}
	\ge\sqrt{\frac2\pi}\,\eta.
    \end{equation}
	Also, Cauchy--Schwarz yields
	\[
	\E\{X(t)^2\mid\mathcal A_{t,d}\}
	\le(\eta^2+\kappa^2)
	\E(S^2+T^2\mid\mathcal A_{t,d})+\sigma^2
	=2(\eta^2+\kappa^2)+\sigma^2\le2.
	\]
	
	If $q=1$, then $Z_{-,d}=Z_{+,d}$ almost surely and  $\mathcal A_{t,d}=\{S>0\}$ with $S=Z_{-,d}=Z_{+,d}$. The formula of Gaussian conditional on $S$ gives
	conditional mean $\sqrt{2/\pi}\,\eta$ and second moment $1$.
	Thus the same bounds hold for all possible $q$.
	The second-moment bound ensures uniform integrability, so \textup{(iii)}
	implies $\E\{X(t)\mid\mathcal A_{t,d}\}\to0$.
	The mean bound in \eqref{eq:mean-bound} then gives $\eta_{t,d}\to0$, proving \textup{(ii)} and
	completing the equivalence.
\end{proof}

\begin{remark}[A unified family of limiting height densities]\label{remark:unify}
	For fixed $t$, recall the smooth peak height density in \eqref{eq:h-smooth-rho}
	\[
	h_{\eta_t}^{\mathrm{SM}}(x)
	=\sqrt{1-\eta^2}\,
	\phi\!\left(\frac{x}{\sqrt{1-\eta_t^2}}\right)
	+\sqrt{2\pi}\,\eta_t x\phi(x)
	\Phi\!\left(\frac{\eta_t x}{\sqrt{1-\eta_t^2}}\right),
	\qquad 0<\eta_t<1.
	\]
	Viewed as a family indexed by $\eta_t$, these densities have the
	pointwise endpoint limits
	\[
	\lim_{\eta_t\downarrow0}h_{\eta_t}^{\mathrm{SM}}(x)=\phi(x),
	\qquad
	\lim_{\eta_t\uparrow1}h_{\eta_t}^{\mathrm{SM}}(x)
	=xe^{-x^2/2}\1_{\{x>0\}},
	\qquad x\in\R.
	\]
	Define $h_0^{\mathrm{SM}}$ and $h_1^{\mathrm{SM}}$ by these limits.
	Since the endpoint limits are probability densities,
	Scheff\'e's theorem also gives convergence in $L^1(\R)$ and hence
	in total variation.
	
	The extended family $\{h_{\eta_t}^{\mathrm{SM}}:0\le\eta_t\le1\}$
	therefore unifies the Gaussian, smooth, and Rayleigh limiting
	height laws established in this paper. In the smooth nonstationary regime, the limiting density is $h_{\eta_t}^{\mathrm{SM}}$, where $\eta_t\in(0,1]$. The endpoint $\eta_t=1$ corresponds to the Rayleigh distribution.
	The Gaussian law is the endpoint extension at $\eta_t=0$.
	The covariance assumptions in the corresponding infill theorems
	determine which member of the family arises.
\end{remark}

\begin{remark}[Uniform convergence of $h_{t,d}$]
	In (iv), the convergence for $h_{t,d}(x)\to\phi(x)$  is uniform on $\R$. This is because, multiplying \eqref{eq:rough-density-error} by $\phi(x)$ gives
	\[
	\sup_{x\in \R}\left|h_{t,d}(x)-\phi(x)\right|
	\le M\sqrt{R_{t,d}}\, \sup_{x\in \R}(1+|x|)\phi(x)\le M'\sqrt{R_{t,d}},
	\]
	where $M$ and $M'$ are positive constants. 
\end{remark}

\begin{corollary}[Gaussian limit with positive peak probability]
	\label{thm:rough-general}
	Suppose $c_\pm\to1$, $1-c_\pm>0$ for all sufficiently small $d$, and
	$q_{t,d}\to q_t\in(-1,1]$.  Then
	\[
	p_{t,d}\to\frac14+\frac{\arcsin q_t}{2\pi}>0,
	\qquad h_{t,d}(x)\to \phi(x),\quad x\in\R,
	\]
	and the density convergence also holds in $L^1$.
\end{corollary}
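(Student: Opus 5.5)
The plan is to reduce everything to Theorem~\ref{thm:rough-nonstationary} by checking that its criterion (i), namely $R_{t,d}\to0$, holds under the present hypotheses. Separately, the limit of $p_{t,d}$ will come from the explicit orthant formula in Theorem~\ref{thm:exact}.

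First, the standing assumption of Theorem~\ref{thm:rough-nonstationary} has to be verified. Since $c_\pm\to1$, we have $c_\pm>-1$ for all small $d$, and by hypothesis $c_\pm<1$. Hence $-1<c_\pm<1$ for all sufficiently small $d$, and Theorem~\ref{thm:exact} together with Theorem~\ref{thm:rough-nonstationary} applies.

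Second, the peak probability. By \eqref{eq:h-exact}, $p_{t,d}=\frac14+\frac1{2\pi}\arcsin(q_{t,d})$. Since $\arcsin$ is continuous on $[-1,1]$ and $q_{t,d}\to q_t$, this gives $p_{t,d}\to\frac14+\frac{\arcsin q_t}{2\pi}$. The limit is strictly positive because $q_t>-1$ forces $\arcsin q_t>-\pi/2$.

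Third, the criterion. We have $v_{t,d}^{\pm}=1-c_\pm\to0$, so the numerator of $R_{t,d}$ in \eqref{eq:R} tends to zero. Meanwhile $1+q_{t,d}\to1+q_t>0$, so the denominator stays bounded away from zero for small $d$. Therefore $R_{t,d}\to0$. By the equivalence (i)$\Leftrightarrow$(iv)$\Leftrightarrow$(v) of Theorem~\ref{thm:rough-nonstationary}, $h_{t,d}(x)\to\phi(x)$ for every $x$ and $\int|h_{t,d}-\phi|\to0$. The explicit rates \eqref{eq:rough-density-error}--\eqref{eq:rough-L1-error} then also hold once $R_{t,d}\le 1/4$.

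There is no real obstacle. The only point needing care is that $q_t>-1$ is used twice: once to make the limiting peak probability positive, and once to keep the denominator $1+q_{t,d}$ of $R_{t,d}$ away from zero. Without it, $R_{t,d}$ need not vanish. The corollary thus covers the case in which peak probabilities stay positive, in contrast with the general theorem, where $p_{t,d}$ may vanish.
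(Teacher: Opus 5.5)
Your proposal is correct and follows essentially the same route as the paper. Both check that $-1<c_\pm<1$ and that $R_{t,d}\to0$, using a numerator tending to zero and a denominator tending to $1+q_t>0$. Both then invoke Theorem~\ref{thm:rough-nonstationary} for the density limits and continuity of $\arcsin$ in \eqref{eq:h-exact} for the positive limiting peak probability.
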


\begin{proof}
	The assumptions imply $-1<c_\pm<1$, while the numerator of
	$R_{t,d}$ tends to zero and its denominator tends to $1+q_t>0$.
	The results follow directly from Theorem~\ref{thm:rough-nonstationary} and \eqref{eq:h-exact}.
\end{proof}

For example, if locally
\[
c_\pm=1-a_\pm d^\alpha+o(d^\alpha),\qquad
c_0=1-bd^\alpha+o(d^\alpha),\qquad a_\pm>0,
\]
then $q_{t,d}\to(a_-+a_+-b)/(2\sqrt{a_-a_+})$.
Corollary~\ref{thm:rough-general} applies when this limit exceeds $-1$.
If $q_{t,d}\to-1$, Theorem~\ref{thm:rough-nonstationary} still applies
provided $v_{t,d}^{-}+v_{t,d}^{+}=o(1+q_{t,d})$.

\begin{corollary}[Uniform sitewise Gaussian approximations]
	\label{cor:rough-uniform}
	For each sufficiently small $d$, let $\mathcal T_d$ be a nonempty set of sites
	with $t,t\pm d\in I$ satisfying $-1<c_\pm<1$, and suppose
	\[
	R_d^*:=\sup_{t\in \mathcal T_d}R_{t,d}\to 0.
	\]
	Then the density convergence in Theorem~\ref{thm:rough-nonstationary} is
	uniform across $t\in \mathcal T_d$, both in $L^1$ and in absolute error on $\R$.
	For sufficiently small $d$,
	\begin{equation*}
		\begin{aligned}
			\sup_{t\in \mathcal T_d}\int_\R|h_{t,d}(x)-\phi(x)|\dd x
			&\le C_0\sqrt{R_d^*},\\
			\sup_{t\in \mathcal T_d,\,u\in\R}
			|\overline H_{t,d}(u)-\Psi(u)|&\le\frac{C_0}{2}\sqrt{R_d^*}.
		\end{aligned}
	\end{equation*}
\end{corollary}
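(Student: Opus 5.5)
The plan is to apply the quantitative bounds of Theorem~\ref{thm:rough-nonstationary} sitewise and take suprema. Since $R_d^*\to0$, there is $d_0$ such that $R_d^*\le1/4$ for all $d<d_0$. Then every $t\in\mathcal T_d$ satisfies $R_{t,d}\le R_d^*\le1/4$, and the hypothesis $-1<c_\pm<1$ holds at each such site. The bounds \eqref{eq:rough-density-error} and \eqref{eq:rough-L1-error} therefore apply at every site with the same universal constants. The key point is that these constants ($\pi/2$, $2\sqrt{\pi/3}$, $C_0$) depend on nothing but $R_{t,d}$. Their derivation uses only $v\le1/2$, $D\ge3/4$, the orthant identity and the arccosine inequalities, none of which involve $t$.

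First, the $L^1$ statement. From \eqref{eq:rough-L1-error}, $\int_\R|h_{t,d}-\phi|\le C_0\sqrt{R_{t,d}}\le C_0\sqrt{R_d^*}$ for every $t\in\mathcal T_d$. Taking the supremum over $t$ gives the first displayed bound, and the right side tends to zero.

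Second, uniform absolute error on $\R$. Multiply \eqref{eq:rough-density-error} by $\phi(x)$ to get
\[
|h_{t,d}(x)-\phi(x)|\le\Bigl(\frac\pi2+2\sqrt{\frac\pi3}\,|x|\Bigr)\phi(x)\sqrt{R_d^*}.
\]
The factor $(\pi/2+2\sqrt{\pi/3}|x|)\phi(x)$ is bounded on $\R$ by an absolute constant $M'$. Hence $\sup_{t\in\mathcal T_d,x\in\R}|h_{t,d}(x)-\phi(x)|\le M'\sqrt{R_d^*}\to0$, exactly as in the remark on uniform convergence.

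Third, the tail bound. For any $u$,
\[
|\overline H_{t,d}(u)-\Psi(u)|=\Bigl|\int_u^\infty(h_{t,d}-\phi)\Bigr|.
\]
Both $h_{t,d}$ and $\phi$ integrate to one, so $\int_\R(h_{t,d}-\phi)=0$. It follows that $\int_u^\infty(h_{t,d}-\phi)=-\int_{-\infty}^u(h_{t,d}-\phi)$. Averaging the two representations bounds the quantity by $\tfrac12\int_\R|h_{t,d}-\phi|$, which is the total variation distance in \eqref{eq:rough-TV}. Combining this with the $L^1$ bound gives $\le (C_0/2)\sqrt{R_d^*}$ uniformly in $t$ and $u$.

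There is no substantial obstacle. The only point requiring care is confirming that the finite-grid bounds in Theorem~\ref{thm:rough-nonstationary} were derived with site-independent constants under the single condition $R_{t,d}\le1/4$, which a reading of that proof shows. The halving in the tail bound comes from the mass-zero identity rather than the crude bound $\int_\R|h_{t,d}-\phi|$.
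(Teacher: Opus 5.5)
Your proof is correct and follows the paper's own argument. Once $R_d^*\le 1/4$, the site-independent bounds \eqref{eq:rough-density-error}--\eqref{eq:rough-L1-error} hold at every $t\in\mathcal T_d$, and the survival error is bounded by half the $L^1$ distance, which is the total variation distance in \eqref{eq:rough-TV}. Your mass-zero argument for the factor $1/2$ and your multiplication of the density bound by $\phi$ for the sup-norm claim only spell out steps the paper leaves implicit; the latter also appears in its remark on uniform convergence.
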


\begin{proof}
	Eventually $R_d^*\le1/4$, so apply
	\eqref{eq:rough-density-error}--\eqref{eq:rough-L1-error} uniformly in
	$t$.  The survival error is bounded by the total variation distance
	in \eqref{eq:rough-TV}.  
\end{proof}

\begin{remark}[Relative tails in a controlled threshold range]
	\label{rem:rough-relative-tails}
	For $u\ge0$ and $R_{t,d}\le1/4$, integrating
	\eqref{eq:rough-density-error} over $(u,\infty)$ gives
	\begin{equation}\label{eq:rough-relative-tail-bound}
		\left|\frac{\overline H_{t,d}(u)}{\Psi(u)}-1\right|
		\le\sqrt{R_{t,d}}\left\{\frac\pi2+
		2\sqrt{\frac\pi3}\frac{\phi(u)}{\Psi(u)}\right\}.
	\end{equation}
	Here $\int_u^\infty x\phi(x)\dd x=\phi(u)$.
	Since $\phi(u)/\Psi(u)=O(1+u)$, if $R_{t,d}\to0$ and
	$u_d\ge0$ satisfies $u_d\sqrt{R_{t,d}}\to0$, then
	$\overline H_{t,d}(u_d)/\Psi(u_d)\to1$.
	This sufficient range comes from the density bound; total variation
	convergence alone supplies only absolute tail accuracy.
\end{remark}

\subsection{The stationary criterion and height--curvature correlation}

For a stationary, unit-variance process, let
\begin{equation*}
	V(d)=1-r(d),\qquad \Delta(d)=4V(d)-V(2d).
\end{equation*}
For the central second difference $D_d=X(d)-2X(0)+X(-d)$,
\[
\Var(D_d)=2\Delta(d),\qquad \Cov\{X(0),D_d\}=-2V(d).
\]
Thus $\Delta(d)$ is one half of the second-difference variance, and
Cauchy--Schwarz gives $\Delta(d)\ge2V(d)^2$.  When $V(d)>0$, define
\begin{equation*}
	\eta_d=-\Corr\{X(0),D_d\}
	=\frac{\sqrt2\,V(d)}{\sqrt{\Delta(d)}}\in(0,1].
\end{equation*}
This is the stationary specialization of \eqref{eq:nonstationary-s-eta} and \eqref{eq:R}:
\begin{equation}\label{eq:stationary-nonstationary-ratios}
	\eta_{t,d}=\eta_d,\qquad
	R_{t,d}=\frac{4V(d)^2}{\Delta(d)}=2\eta_d^2.
\end{equation}

\begin{corollary}[Stationary Gaussian-limit characterization]
	\label{thm:rough-stationary}
	Suppose $-1<r(d)<1$ for all sufficiently small $d$.  Then $\Delta(d)>0$,
	and the condition
	\begin{equation}\label{eq:rough-criterion}
		\frac{V(d)^2}{\Delta(d)}\to 0
	\end{equation}
	is equivalent to weak convergence of the discrete peak height distributions to $N(0,1)$, to pointwise convergence $h_d(x)\to\phi(x)$, and to
	$L^1$ convergence of these densities. The quantitative bounds of
	Theorem~\ref{thm:rough-nonstationary} apply with
	$R_d=4V(d)^2/\Delta(d)$.
\end{corollary}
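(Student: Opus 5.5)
This corollary should follow by specializing Theorem~\ref{thm:rough-nonstationary} to the stationary setting. The only new content is the positivity $\Delta(d)>0$ and the identification of $R_{t,d}$.

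First, I will verify the hypothesis of the theorem and the positivity of $\Delta(d)$. Stationarity gives $c_-=c_+=r(d)$ and $c_0=r(2d)$. The assumption $-1<r(d)<1$ is then exactly the hypothesis $-1<c_\pm<1$ of Theorem~\ref{thm:rough-nonstationary}, and it gives $V(d)>0$. Cauchy--Schwarz applied to $\Cov\{X(0),D_d\}=-2V(d)$ with $\Var X(0)=1$ gives $2\Delta(d)=\Var(D_d)\ge 4V(d)^2>0$, so $\Delta(d)>0$.

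Second, I will identify $R_{t,d}$. From \eqref{eq:q-xi-stationary}, $1+q_d=\{2V(d)+1-2r(d)+r(2d)\}/\{2V(d)\}$. Writing $1-2r(d)+r(2d)=2V(d)-V(2d)$, this becomes $1+q_d=\{4V(d)-V(2d)\}/\{2V(d)\}=\Delta(d)/\{2V(d)\}$. Since $v^\pm_{t,d}=V(d)$, definition \eqref{eq:R} gives
\[
R_{t,d}=\frac{2V(d)}{\Delta(d)/\{2V(d)\}}=\frac{4V(d)^2}{\Delta(d)}=R_d,
\]
which is \eqref{eq:stationary-nonstationary-ratios}. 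As a side check, $\Delta(d)>0$ is equivalent to $q_d>-1$, consistent with the theorem.

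Third, I will transfer the conclusions. Condition \eqref{eq:rough-criterion} is equivalent to $R_d\to0$, which is item (i) of Theorem~\ref{thm:rough-nonstationary}. That theorem makes (i) equivalent to weak convergence to $N(0,1)$, to pointwise convergence $h_d\to\phi$, and to $L^1$ convergence. By Corollary~\ref{cor:stationary-exact}, $h_{t,d}=h_d$ at every interior site, so these statements hold for the stationary peak law. The quantitative bounds \eqref{eq:rough-density-error}--\eqref{eq:rough-L1-error} then hold verbatim with $R_{t,d}=R_d$ whenever $R_d\le1/4$.

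There is no real obstacle. The only care needed is the algebraic simplification of $1+q_d$, which is a short computation.
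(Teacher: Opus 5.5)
Your proposal is correct and follows the paper's route. The paper's proof is a one-line appeal to Theorem~\ref{thm:rough-nonstationary} and the identity $R_{t,d}=4V(d)^2/\Delta(d)$ in \eqref{eq:stationary-nonstationary-ratios}, with $\Delta(d)\ge 2V(d)^2$ from Cauchy--Schwarz noted just beforehand. You have written out the algebra $1+q_d=\Delta(d)/\{2V(d)\}$, which the paper leaves implicit.
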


\begin{proof}
	The results are direct consequences of Theorem~\ref{thm:rough-nonstationary} and
	\eqref{eq:stationary-nonstationary-ratios}.
\end{proof}

The criterion in \eqref{eq:rough-criterion} is equivalent to $\eta_d\to0$, showing that discrete curvature becomes
asymptotically uncorrelated with the height. 

\subsection{The fourth spectral moment boundary}

Assume that $r$ is continuous at the origin, equivalently that $X$ is
mean-square continuous.  By Bochner's theorem, its symmetric spectral
probability measure $F$ satisfies
\[
r(s)=\int_\R\cos(s\lambda)F(\dd\lambda).
\]
Letting $Y_d(\lambda)=1-\cos(d\lambda)$, we have
\begin{equation}\label{eq:Delta}
	V(d)=\int_\R Y_d\dd F,\qquad
	\Delta(d)=2\int_\R Y_d^2\dd F,\qquad
	\eta_d^2=\frac{\{\int_\R Y_d\dd F\}^2}{\int_\R Y_d^2\dd F}.
\end{equation}
Denote by $m_j=\int_\R \lambda^jF(\dd\lambda)$ the $j$-th spectral moment.

\begin{theorem}[Fourth spectral moment classification]\label{thm:spectral-classification}
	Assume that $X$ is mean-square continuous and stationary, and is nonconstant, equivalently $F\ne\delta_0$ or $r\not\equiv1$.
	Then $V(d)$ and $\Delta(d)$ are positive for all sufficiently small $d$,
	and exactly one of the following alternatives holds:
	\begin{enumerate}
		\item If $m_4<\infty$, then $0<m_2<\infty$, $r\in C^4$, and
		\[
		\eta_d\to\eta=\frac{m_2}{\sqrt{m_4}}\in(0,1],\qquad
		\frac{p_d}{d}\to\frac{1}{2\pi}\sqrt{\frac{m_4}{m_2}},\qquad
		h_d(x)\to h_\eta^{\mathrm{SM}}(x),\quad x\in\R.
		\]
		\item If $m_4=\infty$, then $\eta_d\to0$ and $h_d(x)\to\phi(x)$ for
		every $x\in\R$.
	\end{enumerate}
	In both cases the density convergence also holds in $L^1$, and
	hence in total variation.
\end{theorem}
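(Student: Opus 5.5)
Everything reduces to the spectral representation \eqref{eq:Delta} of $V(d)$, $\Delta(d)$ and $\eta_d^2$, combined with Corollary~\ref{cor:stationary-smooth} in the finite case and Corollary~\ref{thm:rough-stationary} in the infinite case.

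\textbf{Positivity.} Since $F\ne\delta_0$, there is $\lambda_0\ne0$ in the support of $F$. For small $d$, $Y_d>0$ on a neighborhood of $\lambda_0$ carrying positive mass. Hence $V(d)>0$ and $\Delta(d)=2\int Y_d^2\dd F>0$. Also $r(d)\to1$ by mean-square continuity, so $-1<r(d)<1$ for small $d$, and the hypotheses of Theorem~\ref{thm:exact} and Corollary~\ref{thm:rough-stationary} hold.

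\textbf{Case $m_4<\infty$.} The moment bound $m_2\le\sqrt{m_4}$ and $F\ne\delta_0$ give $0<m_2<\infty$. Dominated differentiation under the integral gives $r\in C^4$ with $\lambda_1=m_2>0$ and $\lambda_2=m_4$. Corollary~\ref{cor:stationary-smooth} then yields:
\begin{itemize}
\item $p_d/d\to(2\pi)^{-1}\sqrt{m_4/m_2}$;
\item $h_d\to h_\eta^{\mathrm{SM}}$ pointwise and in $L^1$, with $\eta=m_2/\sqrt{m_4}$.
\end{itemize}
For $\eta_d$, note $Y_d/d^2\to\lambda^2/2$ pointwise, with $0\le Y_d/d^2\le\lambda^2/2$. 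Dominated convergence gives $V/d^2\to m_2/2$ and $\int Y_d^2\dd F/d^4\to m_4/4$, so $\eta_d^2\to m_2^2/m_4$. (Alternatively, this follows from the remark after \eqref{eq:nonstationary-s-eta}.) That $\eta\in(0,1]$ is Cauchy--Schwarz.

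\textbf{Case $m_4=\infty$.} By Corollary~\ref{thm:rough-stationary} and \eqref{eq:stationary-nonstationary-ratios}, it suffices to show $\eta_d\to0$. This step is the main obstacle, since $m_2$ may be finite or infinite and no Taylor expansion is available. Fix a truncation level $L>0$ and split $V(d)$ over $\{|\lambda|\le L\}$ and $\{|\lambda|>L\}$. Apply Cauchy--Schwarz to the tail part:
\[
\Bigl(\int_{|\lambda|>L}Y_d\dd F\Bigr)^2\le F(|\lambda|>L)\int Y_d^2\dd F .
\]
The low-frequency part satisfies $\int_{|\lambda|\le L}Y_d\dd F\le d^2L^2/2$. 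Using $(x+y)^2\le2x^2+2y^2$,
\[
\eta_d^2\le 2F(|\lambda|>L)+\frac{d^4L^4/2}{\int Y_d^2\dd F}.
\]
It remains to show $d^{-4}\int Y_d^2\dd F\to\infty$. By Fatou, using $Y_d^2/d^4\to\lambda^4/4$,
\[
\liminf_{d\to0} d^{-4}\int Y_d^2\dd F\ge m_4/4=\infty.
\]
Hence $\limsup\eta_d^2\le2F(|\lambda|>L)$, and letting $L\to\infty$ gives $\eta_d\to0$. Corollary~\ref{thm:rough-stationary} then delivers pointwise and $L^1$ convergence $h_d\to\phi$.

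\textbf{Conclusion.} Total variation convergence in both cases follows by Scheffé's theorem. The two alternatives are exclusive because $\eta>0$ in the first and $\eta_d\to0$ in the second.
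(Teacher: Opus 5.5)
Your proof is correct. The case $m_4<\infty$ matches the paper, but the key step for $m_4=\infty$ takes a genuinely different route.

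\textbf{The paper's route.} It argues by contradiction. Assuming $\eta_{d_n}\ge\varepsilon$ along a sequence, the normalized weights $W_n=Y_{d_n}/V(d_n)$ satisfy $\int W_n\,\dd F=1$ and $\int W_n^2\,\dd F\le\varepsilon^{-2}$, so they are uniformly integrable. It then splits on $m_2$. If $m_2=\infty$, Fatou forces $W_n\to0$ pointwise, contradicting $\int W_n\,\dd F=1$. If $m_2<\infty$, dominated convergence plus Fatou applied to $(Y_{d_n}/d_n^2)^2$ contradicts $m_4=\infty$.

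\textbf{Your route.} The truncation plus Cauchy--Schwarz argument avoids the contradiction, the subsequence, uniform integrability and the case split on $m_2$. Since $\int Y_d^2\,\dd F=\Delta(d)/2$, it yields the explicit inequality $\eta_d^2\le 2F(|\lambda|>L)+L^4d^4/\Delta(d)$. This holds for every $L>0$ whenever $\Delta(d)>0$. It shows in one stroke that $\Delta(d)/d^4\to\infty$ alone forces $\eta_d\to0$, and Fatou supplies exactly this when $m_4=\infty$. It could also turn spectral tail information and a lower bound on $\Delta(d)$ into a rate for $\eta_d$. That rate would feed directly into the $\sqrt{R_d}=\sqrt2\,\eta_d$ bounds of Theorem~\ref{thm:rough-nonstationary}.

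\textbf{What each buys.} The paper's argument is less quantitative. In exchange, it makes visible the mechanism behind Remark~\ref{rem:fixed-covariance-classification}: a non-vanishing $\eta_d$ forces uniform integrability of the normalized spectral weights, which in turn forces $m_2<\infty$ and then $m_4<\infty$.

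\textbf{Positivity.} Your argument via a nonzero support point $\lambda_0$ of $F$ gives $V(d)>0$ and $\Delta(d)>0$ simultaneously. It is more direct than the paper's, which first rules out $\Delta(d_n)=0$ through $d_n\lambda\in2\pi\Z$ and then deduces $V(d)>0$.
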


\begin{proof}
	First, $V(d)\to0$ by dominated convergence.  Nonconstancy implies that
	$\Delta(d)>0$ for all sufficiently small $d$.  Otherwise, along a sequence $d_n\downarrow0$,
	the nonnegative integrand in \eqref{eq:Delta} would give
	$d_n\lambda\in2\pi\Z$ for every $n$, for $F$-almost every $\lambda$.
	Since $d_n\lambda\to0$, this forces $\lambda=0$, contradicting
	nonconstancy.  Moreover, $\Delta(d)>0$ implies $V(d)>0$, for all sufficiently small $d$, through the same spectral identities.
	
	Suppose first that $m_4<\infty$.  Since
	\[
	0\le\frac{Y_d(\lambda)}{d^2}\le\frac{\lambda^2}{2},\qquad
	0\le\frac{2Y_d(\lambda)^2}{d^4}\le\frac{\lambda^4}{2},
	\]
	dominated convergence gives
	\[
	\frac{V(d)}{d^2}\to\frac{m_2}{2},\qquad
	\frac{\Delta(d)}{d^4}\to\frac{m_4}{2},\qquad
	\eta_d=\frac{\sqrt2\,V(d)}{\sqrt{\Delta(d)}}
	\to\frac{m_2}{\sqrt{m_4}}=\eta.
	\]
	Nonconstancy gives $m_2>0$, and $m_2^2\le m_4$ by Cauchy--Schwarz.
	Differentiation under the spectral integral gives $r\in C^4$, with
	$-r''(0)=m_2$ and $r^{(4)}(0)=m_4$.  The asserted probability and density limits now follow from
	Corollary~\ref{cor:stationary-smooth}, including the degenerate endpoint
	$m_4=m_2^2$.
	
	Now suppose that $m_4=\infty$.  We first prove that $\eta_d\to0$.
	Otherwise, there would exist $\varepsilon>0$ and a sequence
	$d_n\downarrow0$ such that $\eta_{d_n}\geq\varepsilon$.  Define $W_n(\lambda)=Y_{d_n}(\lambda)/V(d_n)$.
	Then
	\[
	\int_{\R}W_n\,\dd F=1,\qquad
	\int_{\R}W_n^2\,\dd F=\eta_{d_n}^{-2}
	\leq\varepsilon^{-2},
	\]
	so $\{W_n\}$ is uniformly integrable.
	
	If $m_2=\infty$, Fatou's lemma gives
	\[
	\liminf_{n\to\infty}\frac{V(d_n)}{d_n^2}
	\geq\frac12\int_{\R}\lambda^2F(\dd\lambda)=\infty.
	\]
	Since
	$Y_{d_n}(\lambda)/d_n^2\to\lambda^2/2$ for every fixed $\lambda$, it
	follows that $W_n(\lambda)\to0$ for almost every $\lambda$.
	Uniform integrability would then imply
	$\int W_n\,\dd F\to0$, contradicting $\int W_n\,\dd F=1$.  Thus
	$m_2<\infty$, and dominated convergence gives $V(d_n)/d_n^2 \to m_2/2$.
	But
	\[
	\int_{\R}\left\{\frac{Y_{d_n}(\lambda)}{d_n^2}\right\}^2F(\dd\lambda)
	=\frac{\{V(d_n)/d_n^2\}^2}{\eta_{d_n}^2}
	\]
	is bounded.  Fatou's lemma would therefore give
	\[
	\frac{m_4}{4}
	\leq\liminf_{n\to\infty}
	\int_{\R}\left\{\frac{Y_{d_n}(\lambda)}{d_n^2}\right\}^2
	F(\dd\lambda)<\infty,
	\]
	contradicting $m_4=\infty$.  Hence $\eta_d\to0$, or equivalently
	$V(d)^2/\Delta(d)\to0$.
	Corollary~\ref{thm:rough-stationary} now gives the Gaussian limit and its
	$L^1$ convergence.
\end{proof}

\begin{remark}\label{rem:fixed-covariance-classification}
	Under the assumptions of Theorem~\ref{thm:spectral-classification},
	a positive limiting value of $\eta_d$ requires $m_4<\infty$.
	The covariance then belongs to $C^4$, and the limiting parameter is
	$\eta=m_2/\sqrt{m_4}\in(0,1]$. If $m_4=\infty$, then $\eta_d\to0$
	and the limiting height law is Gaussian. Finiteness of the fourth
	spectral moment therefore exactly separates the two regimes for
	each fixed nonconstant, mean-square continuous stationary process.
	
	Remark~\ref{remark:unify} connects these limits within the same
	continuous family with the Gaussian
	density at $\eta=0$.
	This interpolation describes variation across covariance models.
	At $\eta=1$, equality $m_4=m_2^2$ forces spectral support on
	$\{\pm\sqrt{m_2}\}$, so $r(s)=\cos(\sqrt{m_2}s)$.
	The conditional curvature variance vanishes, while the limiting
	height law is the nondegenerate Rayleigh law.
\end{remark}

\begin{corollary}[Divergence of expected peak counts in the rough regime]\label{cor:rough-counts}
	Under the assumptions of Theorem~\ref{thm:spectral-classification}, if
	$m_4=\infty$, then $p_d/d\to\infty$.  On an interval $I$ of length $T>0$,
	for every fixed $u\in\R$,
	\[
	\E M_d(I)\to\infty,\qquad \E M_{u,d}(I)\to\infty,\qquad
	\frac{\E M_{u,d}(I)}{\E M_d(I)}\to\Psi(u).
	\]
\end{corollary}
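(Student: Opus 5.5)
The plan is to show $p_d/d\to\infty$ directly from the orthant formula and the spectral representation. After that, the count statements follow from \eqref{eq:EMd-stationary} together with the Gaussian height limit of Theorem~\ref{thm:spectral-classification}.

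By Remark~\ref{remark:gamma} and \eqref{eq:q-xi-stationary}, $p_d=F(1+q_d)$ with $F(s)=\arccos(1-s)/(2\pi)$. Here
\[
1+q_d=\frac{4V(d)-V(2d)}{2V(d)}=\frac{\Delta(d)}{2V(d)}.
\]
The bound $F(s)\ge\sqrt{2s}/(2\pi)$ from the proof of Theorem~\ref{thm:rough-nonstationary} then gives
\[
\frac{p_d}{d}\ge\frac{1}{2\pi}\sqrt{\frac{\Delta(d)}{V(d)d^2}}.
\]
It therefore suffices to show $\Delta(d)/(d^2V(d))\to\infty$ when $m_4=\infty$. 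Write this ratio as
\[
\frac{\Delta(d)}{V(d)^2}\cdot\frac{V(d)}{d^2}=\frac{2}{\eta_d^2}\cdot\frac{V(d)}{d^2}.
\]
Theorem~\ref{thm:spectral-classification} gives $\eta_d\to0$, so the first factor tends to infinity.

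The second factor needs a lower bound, and this is the main obstacle, since $V(d)/d^2$ alone could in principle be small. By Fatou's lemma, $\liminf V(d)/d^2\ge m_2/2>0$, where $m_2\in(0,\infty]$ is positive by nonconstancy. This holds whether or not $m_2$ is finite. Hence the product diverges and $p_d/d\to\infty$. As a backup, one could bound $\Delta(d)/d^4=2\int (Y_d/d^2)^2\dd F$ from below via Fatou, which yields $\liminf\Delta(d)/d^4\ge m_4/2=\infty$. Combined with $V(d)\le d^2 m_2/2$ when $m_2<\infty$, this gives the claim in that case. The $m_2=\infty$ case is covered by the first argument.

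For the counts, an interval of length $T>0$ has $n_d\ge T/d-3$ interior sites. Then
\[
\E M_d(I)=n_dp_d\ge (T-3d)\,\frac{p_d}{d}\to\infty.
\]
For the thresholded counts, \eqref{eq:EMd-stationary} gives $\E M_{u,d}(I)/\E M_d(I)=\overline H_d(u)$, since the site-independent factor $n_dp_d$ cancels. By Theorem~\ref{thm:spectral-classification} the height law converges in total variation to $N(0,1)$, so $\overline H_d(u)\to\Psi(u)$ for every fixed $u$. Because $\Psi(u)>0$, it follows that $\E M_{u,d}(I)=\E M_d(I)\,\overline H_d(u)\to\infty$.
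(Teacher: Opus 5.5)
Your proof is correct and follows the paper's outline. You use the same orthant lower bound $p_d/d\ge\sqrt{2\epsilon_d}/(2\pi d)$ with $\epsilon_d=\Delta(d)/(2V(d))$, and the same count argument via $\E M_d(I)=n_dp_d$ and total variation convergence of $\overline H_d(u)$ to $\Psi(u)$.

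The one genuine difference is how you show $\epsilon_d/d^2\to\infty$. The paper splits into two cases:
\begin{itemize}
\item If $m_2=\infty$, it uses only $\epsilon_d\ge V(d)$ (equivalently $\eta_d\le1$) together with Fatou's lemma for $V(d)/d^2\to\infty$.
\item If $0<m_2<\infty$, it uses $V(d)/d^2\to m_2/2$ together with Fatou's lemma applied to $\Delta(d)/d^4$. This is exactly your backup argument.
\end{itemize}

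Your main argument instead writes $\epsilon_d/d^2=\eta_d^{-2}\,V(d)/d^2$. It then combines the conclusion $\eta_d\to0$ of Theorem~\ref{thm:spectral-classification} with $\liminf V(d)/d^2\ge m_2/2>0$, which holds whether or not $m_2$ is finite. This handles both cases at once. It also makes transparent that, for a nonconstant mean-square continuous stationary process, vanishing height--curvature correlation by itself forces $p_d/d\to\infty$.

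The cost is that your argument depends on the uniform-integrability argument used to prove $\eta_d\to0$ inside the theorem. The paper's case split obtains the divergence of $p_d/d$ directly from $m_4=\infty$, using only Fatou's lemma.
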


\begin{proof}
	Let $\epsilon_d=\Delta(d)/(2V(d))\in(0,2]$.  The orthant formula and
	$\cos z\ge1-z^2/2$ give
	\[
	\frac{p_d}{d}
	=\frac{\arccos(1-\epsilon_d)}{2\pi d}
	\ge\frac{\sqrt{2\epsilon_d}}{2\pi d}.
	\]
	If $m_2=\infty$, Fatou's lemma gives $V(d)/d^2\to\infty$, and
	$\epsilon_d\ge V(d)$ implies $\epsilon_d/d^2\to\infty$.
	If $0<m_2<\infty$, then $V(d)/d^2\to m_2/2$, whereas Fatou's lemma
	applied to the second identity in \eqref{eq:Delta} gives
	$\Delta(d)/d^4\to\infty$.  Again $\epsilon_d/d^2\to\infty$.
	Thus $p_d/d\to\infty$ in either case.
	With $N_d=|\mathcal G_d^\circ|=T/d+O(1)$, stationarity yields
	$\E M_d(I)=N_dp_d$ and
	\[
	\frac{\E M_{u,d}(I)}{\E M_d(I)}
	=\int_u^\infty h_d(x)\dd x\to\Psi(u)
	\]
	by total variation convergence.  Since $\Psi(u)>0$ for finite $u$,
	both expected counts diverge.
\end{proof}

\subsection{Regular variation and subquartic covariance expansions}

A positive measurable function $L$ is slowly varying at zero if
$L(sd)/L(d)\to1$ as $d\downarrow0$ for every $s>0$.

\begin{corollary}[Nondifferentiable regular variation regime]\label{cor:regular-variation}
	Suppose
	\begin{equation}\label{eq:regular-variation}
		V(d)=1-r(d)\sim cd^\alpha L(d),\qquad c>0,\quad0<\alpha<2,
	\end{equation}
	where $L$ is slowly varying at zero.  Then \eqref{eq:rough-criterion}
	holds, $h_d(x)\to\phi(x)$ for all $x$, and the density convergence is in
	$L^1$.  Moreover,
	\[
	q_d\to q=1-2^{\alpha-1},\qquad
	p_d\to p=\frac14+\frac{1}{2\pi}\arcsin(q)>0.
	\]
	On an interval of length $T$, for every fixed finite threshold $u$,
	\begin{equation*}
		\E M_d\sim\frac{T}{d}p,\qquad
		\E M_{u,d}\sim\frac{T}{d}p\Psi(u).
	\end{equation*}
\end{corollary}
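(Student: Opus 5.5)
The argument reduces everything to the limiting ratios $V(2d)/V(d)$ and $\Delta(d)/V(d)$, computed from the regular variation hypothesis \eqref{eq:regular-variation}. Since $L$ is slowly varying, $L(2d)/L(d)\to1$, so
\[
\frac{V(2d)}{V(d)}\to 2^\alpha ,
\qquad
\frac{\Delta(d)}{V(d)}=4-\frac{V(2d)}{V(d)}\to 4-2^\alpha>0 ,
\]
where positivity uses $\alpha<2$. Hence $\Delta(d)\sim(4-2^\alpha)V(d)$, and
\[
\frac{V(d)^2}{\Delta(d)}\sim\frac{V(d)}{4-2^\alpha}\to0,
\]
because $V(d)\sim cd^\alpha L(d)\to0$. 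To justify the last limit, note that a slowly varying function satisfies $d^{\epsilon}L(d)\to0$ for every $\epsilon>0$ (Potter bounds). Alternatively, mean-square continuity follows from $V(d)\to0$, which is implicit in the asymptotic relation near zero, and the paper assumes it throughout.

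The criterion \eqref{eq:rough-criterion} therefore holds. Also $0<V(d)<2$ for small $d$, so $-1<r(d)<1$. Corollary~\ref{thm:rough-stationary} then gives pointwise convergence $h_d\to\phi$ and convergence in $L^1$.

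For the peak probability, \eqref{eq:q-xi-stationary} gives
\[
q_d=\frac{2V(d)-V(2d)}{2V(d)}=1-\frac{V(2d)}{2V(d)}\to 1-2^{\alpha-1}=q .
\]
Since $\alpha<2$, we have $q>-1$. Continuity of $\arcsin$ and the orthant formula in Corollary~\ref{cor:stationary-exact} give $p_d\to p=\tfrac14+\arcsin(q)/(2\pi)>0$. Equivalently, Corollary~\ref{thm:rough-general} applies directly with $c_\pm=r(d)\to1$.

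For the counts, Corollary~\ref{cor:stationary-exact} gives $\E M_d=n_dp_d$ with $n_d=T/d+O(1)$, so $\E M_d\sim (T/d)p$. In addition,
\[
\E M_{u,d}=\E M_d\int_u^\infty h_d(x)\dd x .
\]
By total variation convergence the integral tends to $\Psi(u)>0$, which yields $\E M_{u,d}\sim (T/d)p\Psi(u)$.

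The step that needs the most care is the verification that $\Delta(d)/V(d)$ has the positive limit $4-2^\alpha$. It uses slow variation only at the fixed scale $s=2$, and it is exactly where $\alpha<2$ enters; at $\alpha=2$ the leading terms cancel and the argument breaks down. The remaining steps are direct applications of earlier results.
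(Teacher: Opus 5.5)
Your proposal is correct and follows the paper's proof essentially step for step: the limits $V(2d)/V(d)\to2^\alpha$ and $\Delta(d)/V(d)\to4-2^\alpha>0$ feed into the stationary Gaussian-limit corollary, $q_d=1-V(2d)/\{2V(d)\}$ gives the peak probability, and $N_d=T/d+O(1)$ with total variation convergence gives the counts. Your explicit checks that $V(d)\to0$ (via Potter bounds) and that $-1<r(d)<1$ for small $d$ supply details the paper leaves implicit.
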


\begin{proof}
	Regular variation gives $V(2d)/V(d)\to2^\alpha$ and $V(d)\to0$.
	Therefore
	\[
	\frac{\Delta(d)}{V(d)}=4-\frac{V(2d)}{V(d)}\to4-2^\alpha>0,\qquad
	\frac{V(d)^2}{\Delta(d)}=\frac{V(d)}{\Delta(d)/V(d)}\to0.
	\]
	Corollary~\ref{thm:rough-stationary} yields the density assertions.
	Since $q_d=1-V(2d)/(2V(d))$, we obtain $q_d\to1-2^{\alpha-1}\in(-1,1/2)$,
	and the orthant identity gives the limit of $p_d$.
	With $N_d=T/d+O(1)$, the exact count identities and total variation
	convergence give
	\[
	\E M_d=N_dp_d\sim\frac{T}{d}p,
	\qquad \overline H_d(u)=\int_u^\infty h_d(x)\dd x\to\Psi(u),
	\]
	which proves the assertion for $\E M_{u,d}$.
\end{proof}

The local form \eqref{eq:regular-variation} is standard in the extreme value
theory of nonsmooth Gaussian processes
\citep{Pickands69,LeadbetterLindgrenRootzen83,Piterbarg96}.

\begin{corollary}[Once but not twice mean-square differentiable]\label{cor:subquartic}
	Suppose, for some $2<\beta<4$,
	\begin{equation*}
		r(d)=1-\frac{\lambda_1}{2}d^2+c|d|^\beta+o(|d|^\beta),
		\qquad\lambda_1,c>0.
	\end{equation*}
	Then \eqref{eq:rough-criterion} holds and $h_d\to\phi$ pointwise and in
	$L^1$.  With $K_\beta=c(2^\beta-4)/\lambda_1$,
	\[
	p_d\sim\frac{\sqrt{2K_\beta}}{2\pi}d^{(\beta-2)/2},\qquad
	\E M_d\sim\frac{T\sqrt{2K_\beta}}{2\pi}d^{(\beta-4)/2}
	\]
	on an interval of length $T$.  For every fixed finite $u$, the latter
	expected count multiplied by $\Psi(u)$ is asymptotic to $\E M_{u,d}$.
\end{corollary}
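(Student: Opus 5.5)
We will proceed as in Corollary~\ref{cor:regular-variation}, working with the exact stationary quantities $V(d)$, $\Delta(d)$ and $q_d$. The first step is the expansions. From the assumed form of $r$ we get
\[
V(d)=\frac{\lambda_1}{2}d^2-c d^\beta+o(d^\beta),\qquad V(2d)=2\lambda_1d^2-c2^\beta d^\beta+o(d^\beta).
\]
Hence
\[
\Delta(d)=4V(d)-V(2d)=c(2^\beta-4)d^\beta+o(d^\beta),
\]
because the quadratic terms cancel exactly. Since $\beta>2$ we have $2^\beta-4>0$, so $\Delta(d)$ is positive and of exact order $d^\beta$ for small $d$. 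The same expansion gives $r(d)\to1$ and $r(d)<1$ for small $d$, so the hypothesis $-1<r(d)<1$ of Corollary~\ref{thm:rough-stationary} holds.

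The second step is the Gaussian limit. We have
\[
\frac{V(d)^2}{\Delta(d)}\sim\frac{\lambda_1^2 d^4/4}{c(2^\beta-4)d^\beta}=\frac{\lambda_1^2}{4c(2^\beta-4)}d^{4-\beta}\to0,
\]
because $\beta<4$. So \eqref{eq:rough-criterion} holds, and Corollary~\ref{thm:rough-stationary} gives $h_d\to\phi$ both pointwise and in $L^1$.

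The third step is the peak probability. Write $\epsilon_d=1+q_d=\Delta(d)/(2V(d))$, which is the identity already used in the proof of Corollary~\ref{cor:rough-counts}. Substituting the expansions,
\[
\epsilon_d\sim\frac{c(2^\beta-4)d^\beta}{\lambda_1 d^2}=K_\beta d^{\beta-2}\to0.
\]
By Remark~\ref{remark:gamma}, $p_d=\arccos(1-\epsilon_d)/(2\pi)$, and $\arccos(1-y)\sim\sqrt{2y}$ as $y\downarrow0$. Therefore
\[
p_d\sim\frac{\sqrt{2K_\beta}}{2\pi}d^{(\beta-2)/2}.
\]

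The last step is the counts. With $N_d=T/d+O(1)$ we have $\E M_d=N_dp_d\sim\frac{T\sqrt{2K_\beta}}{2\pi}d^{(\beta-4)/2}$; this diverges since $\beta<4$, consistent with Corollary~\ref{cor:rough-counts}. For a threshold, $\E M_{u,d}=\E M_d\,\overline H_d(u)$, and $\overline H_d(u)\to\Psi(u)>0$ by total variation convergence. This gives the stated asymptotic for $\E M_{u,d}$.

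The only delicate point is in the first step: one has to check that the $o(d^\beta)$ remainders of $V(d)$ and $V(2d)$ really produce an $o(d^\beta)$ error in $\Delta(d)$, and that no term of order $d^2$ survives the cancellation. Both hold because the $d^2$ coefficients cancel exactly and both remainders are $o(d^\beta)$. The sign condition $2^\beta>4$ is what guarantees that the leading term of $\Delta(d)$ is positive.
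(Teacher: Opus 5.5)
Your argument for every displayed conclusion matches the paper's proof step for step. You use the expansions of $V(d)$ and $\Delta(d)$ with exact cancellation of the $d^2$ terms, and the ratio $V(d)^2/\Delta(d)\sim\lambda_1^2d^{4-\beta}/\{4c(2^\beta-4)\}$ fed into Corollary~\ref{thm:rough-stationary}. You use the identity $\epsilon_d=1+q_d=\Delta(d)/(2V(d))\sim K_\beta d^{\beta-2}$ with the arccosine asymptotic, and for the counts, multiplication by $N_d=T/d+O(1)$ together with total variation convergence of $\overline H_d(u)$.

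\textbf{What is missing.} Your proposal does not prove the corollary's title claim: the process has one, but not two, mean-square derivatives. The paper's proof establishes this at the end using the spectral identities \eqref{eq:Delta}.
\begin{enumerate}
\item Since $Y_d(\lambda)/d^2\to\lambda^2/2$ pointwise and $V(d)/d^2\to\lambda_1/2$, Fatou's lemma gives $m_2\le\lambda_1<\infty$.
\item Dominated convergence, with envelope $\lambda^2/2$, then gives $m_2=\lambda_1$.
\item If $m_4<\infty$, dominated convergence applied to $2\int (Y_d/d^2)^2\dd F$ would force $\Delta(d)/d^4\to m_4/2$. This contradicts your own expansion $\Delta(d)/d^4\sim c(2^\beta-4)d^{\beta-4}\to\infty$, so $m_4=\infty$.
\end{enumerate}
Since $m_2<\infty$ and $m_4=\infty$ are the spectral conditions for one and for two mean-square derivatives, adding these few lines completes the proof.
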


\begin{proof}
	The covariance expansion gives
	\[
	V(d)=\frac{\lambda_1}{2}d^2-cd^\beta+o(d^\beta),\qquad
	\Delta(d)=c(2^\beta-4)d^\beta+o(d^\beta).
	\]
	Thus
	\[
	\frac{V(d)^2}{\Delta(d)}
	\sim\frac{\lambda_1^2}{4c(2^\beta-4)}d^{4-\beta}\to0,
	\qquad
	\epsilon_d=1+q_d=\frac{\Delta(d)}{2V(d)}\sim K_\beta d^{\beta-2}.
	\]
	Corollary~\ref{thm:rough-stationary} proves the Gaussian limit, while
	$p_d=\arccos(1-\epsilon_d)/(2\pi)\sim\sqrt{2\epsilon_d}/(2\pi)$
	proves the peak probability asymptotic.  Multiplication by
	$N_d=T/d+O(1)$ proves the count assertions.
	Finally, Fatou's lemma and $V(d)/d^2\to\lambda_1/2$ imply $m_2<\infty$;
	dominated convergence then identifies $m_2=\lambda_1$.  The relation
	$\Delta(d)/d^4\to\infty$ excludes $m_4<\infty$.  Hence the process has
	one, but not two, mean-square derivatives.
\end{proof}

\section{Examples and Numerical Comparisons}\label{sec:examples}

\subsection{Ornstein--Uhlenbeck and powered-exponential correlations}

For the Ornstein--Uhlenbeck correlation
$r(s)=e^{-\lambda|s|}$, $\lambda>0$, the identity $r(2d)=r(d)^2$
gives $\xi_d=0$. Thus
\begin{equation}\label{eq:OU-density}
	h_d(x)=\frac{\phi(x)\Phi(b_dx)^2}{p_d},\qquad
	p_d=\frac14+\frac{1}{2\pi}\arcsin\!\left(\frac{1-r(d)}2\right).
\end{equation}
Since $1-r(d)\sim\lambda d$, $p_d\to1/4$ and $h_d\to\phi$
pointwise and in $L^1$. The expected number of interior peaks on an
interval of length $T$ is asymptotic to $T/(4d)$.

More generally, the powered-exponential family $r(s)=\exp(-c|s|^\alpha)$, with $c>0$ and
$0<\alpha<2$, satisfies Corollary~\ref{cor:regular-variation}.
Its limiting peak probability is
$(2\pi)^{-1}\arccos(2^{\alpha-1}-1)$ and its height limit is Gaussian.
At $\alpha=2$ the covariance is smooth and the next example applies.

\subsection{Squared-exponential correlation}

For $r(s)=\exp\{-s^2/(2\ell^2)\}$, one has
$\lambda_1=\ell^{-2}$, $\lambda_2=3\ell^{-4}$, and $\eta=1/\sqrt3$.
The limit is
\begin{equation*}
	h_\eta^{\mathrm{SM}}(x)=\sqrt{\frac23}\,
	\phi\!\left(\sqrt{\frac32}x\right)
	+\sqrt{\frac{2\pi}3}\,x\phi(x)\Phi\!\left(\frac{x}{\sqrt2}\right).
\end{equation*}
Here $J=\sqrt3/(2\pi\ell)$. The limiting height law does not depend
on the correlation length $\ell$, whereas the peak intensity does.

\subsection{The Mat\'ern smoothness phase diagram}

Consider the Mat\'ern correlation function
\begin{equation}\label{eq:matern-covariance}
	r_\nu(s)=\frac{2^{1-\nu}}{\Gamma(\nu)}
	(\theta|s|)^\nu K_\nu(\theta|s|),\qquad \nu,\theta>0,
\end{equation}
where $K_\nu$ is the modified Bessel function.  The parameter $\nu$
controls mean-square differentiability \citep{Stein99,GuttorpGneiting06}. Its one-dimensional spectral density is
\begin{equation}\label{eq:matern-spectrum}
	f_\nu(\lambda)=\frac{\Gamma(\nu+1/2)}{\sqrt\pi\,\Gamma(\nu)}
	\frac{\theta^{2\nu}}{(\theta^2+\lambda^2)^{\nu+1/2}}.
\end{equation}
For $j\geq0$, $m_j<\infty$ precisely when $j<2\nu$. In particular,
Theorem~\ref{thm:spectral-classification} gives a Gaussian peak height
limit for $\nu\leq2$ and a smooth limit for $\nu>2$.
Write $L_d=\log\{1/(\theta d)\}$, which tends to infinity
logarithmically as $d\downarrow0$ for fixed $\theta$.
The covariance expansions follow from the small-argument series for
$K_\nu$; the logarithmic cases use \cite[Section~10.31.1]{DLMF}.

For noninteger $\nu$, the two series branches have leading terms
\[
r_\nu(d)=1+\frac{(\theta d)^2}{4(1-\nu)}+\cdots
+\frac{\Gamma(-\nu)}{2^{2\nu}\Gamma(\nu)}(\theta d)^{2\nu}
\{1+O(d^2)\}.
\]
When $0<\nu<1$, this gives
$1-r_\nu(d)\sim c_\nu d^{2\nu}$, where
$c_\nu=-\Gamma(-\nu)\theta^{2\nu}/\{2^{2\nu}\Gamma(\nu)\}>0$.
When $1<\nu<2$, it gives
\[
r_\nu(d)=1-\frac{\theta^2}{4(\nu-1)}d^2+c_\nu d^{2\nu}
+o(d^{2\nu}),\qquad
c_\nu=\frac{\Gamma(-\nu)\theta^{2\nu}}{2^{2\nu}\Gamma(\nu)}>0.
\]
Corollaries~\ref{cor:regular-variation} and \ref{cor:subquartic}
then give the respective probabilities and count rates.

\begin{table}[tbp]
	\centering\small
	\caption{One-dimensional Mat\'ern infill limits on a fixed interval of
		positive length. Here $L_d=\log\{1/(\theta d)\}$, and $a,c$ denote
		positive constants that may differ between rows. The notation
		$f\asymp g$ means that $f/g$ is bounded above and below by positive
		constants for small $d$.}
	\label{tab:matern}
	\setlength{\tabcolsep}{4pt}
	\begin{tabular}{@{}p{.10\textwidth}p{.33\textwidth}p{.17\textwidth}p{.12\textwidth}p{.16\textwidth}@{}}
		\toprule
		$\nu$ & Local covariance behavior & Peak probability & Height limit & Order of $\E M_d$\\
		\midrule
		$0<\nu<1$ & $1-r_\nu(d)\sim cd^{2\nu}$ & $p_d\to p>0$ & $\phi$ & $d^{-1}$\\[1mm]
		$\nu=1$ & $1-r_1(d)\asymp d^2L_d$ & $p_d\asymp L_d^{-1/2}$ & $\phi$ & $d^{-1}L_d^{-1/2}$\\[1mm]
		$1<\nu<2$ & $r_\nu(d)=1-ad^2+cd^{2\nu}+o(d^{2\nu})$ & $p_d\asymp d^{\nu-1}$ & $\phi$ & $d^{\nu-2}$\\[1mm]
		$\nu=2$ & $r_2(d)=1-ad^2+cd^4L_d+O(d^4)$ & $p_d\asymp dL_d^{1/2}$ & $\phi$ & $L_d^{1/2}$\\[1mm]
		$\nu>2$ & $r_\nu\in C^4$ & $p_d\asymp d$ & $h^{\mathrm{SM}}$ & $1$\\
		\bottomrule
	\end{tabular}
\end{table}

At $\nu=1$, the integer-order expansion gives
\begin{equation*}
	V(d)=\frac{\theta^2}{2}d^2L_d+B_\theta d^2+O(d^4L_d),\qquad
	B_\theta=\frac{\theta^2}{2}\left(\log2-\gamma_{\!E}+\frac12\right),
\end{equation*}
where $\gamma_{\!E}$ is Euler's constant. Since $L_{2d}=L_d-\log2$,
the quadratic constants cancel in $4V(d)-V(2d)$, giving
\[
\Delta(d)\sim2\theta^2(\log2)d^2,\qquad
\frac{V(d)^2}{\Delta(d)}
\sim\frac{\theta^2d^2L_d^2}{8\log2}\to 0.
\]
Consequently,
\begin{equation*}
	p_d\sim\frac{\sqrt{\log2}}{\pi\sqrt{L_d}},\qquad
	\E M_d\sim\frac{T\sqrt{\log2}}{\pi d\sqrt{L_d}}.
\end{equation*}

At $\nu=2$, the first nonsmooth term has quartic logarithmic order:
\begin{equation*}
	r_2(d)=1-\frac{\theta^2}{4}d^2
	+\frac{\theta^4}{16}d^4L_d+O(d^4).
\end{equation*}
Therefore,
\[
\Delta(d)\sim\frac{3\theta^4}{4}d^4L_d,\qquad
\frac{V(d)^2}{\Delta(d)}\sim\frac{1}{12L_d},\qquad
\eta_d^2\sim\frac{1}{6L_d}.
\]
The Gaussian limit persists at this boundary, but the height-curvature
correlation decays only logarithmically. Moreover,
\begin{equation*}
	p_d\sim\frac{\sqrt3\,\theta}{2\pi}d\sqrt{L_d},\qquad
	\E M_d\sim\frac{T\sqrt3\,\theta}{2\pi}\sqrt{L_d}.
\end{equation*}
For both integer boundaries, the expected count above any fixed finite
threshold is asymptotic to the displayed total count multiplied by $\Psi(u)$.

For $\nu>2$, integrating \eqref{eq:matern-spectrum} gives
\begin{equation*}
	m_2=\frac{\theta^2}{2(\nu-1)},\quad
	m_4=\frac{3\theta^4}{4(\nu-1)(\nu-2)},\quad
	\eta=\sqrt{\frac{\nu-2}{3(\nu-1)}},\quad
	J=\frac{\theta}{2\pi}\sqrt{\frac{3}{2(\nu-2)}}.
\end{equation*}
These moments also make explicit why the limiting height shape is
independent of $\theta$, while the count intensity scales with $\theta$. 

Table~\ref{tab:matern} summarizes the peak probability asymptotics, limiting height distributions, and expected peak count rates for the Mat\'ern family over the full range $\nu>0$. 

\subsection{Nonstationary unit-variance examples}\label{sec:nonstationary-example}

Let $Y$ be a centered stationary unit-variance process with smooth
covariance $r$ and $0<m_2,m_4<\infty$. For a $C^4$ function $f$ with
$f'(t)>0$, define $X(t)=Y(f(t))$. Then
\[
C(s,t)=r(f(t)-f(s)),\qquad
c_\pm=r(f(t\pm d)-f(t)),\qquad
c_0=r(f(t+d)-f(t-d)).
\]
These three correlations give the exact nonstationary law in
Theorem~\ref{thm:exact}; the two center-neighbor correlations generally
differ even though the observation grid is uniform.
The mean-square chain rule yields
\[
\lambda_1(t)=f'(t)^2m_2,\qquad
a(t)=f'(t)f''(t)m_2,\qquad
\lambda_2(t)=f''(t)^2m_2+f'(t)^4m_4.
\]
Thus
\begin{equation}\label{eq:time-warp-intensity}
	\delta_t^2=f'(t)^4m_4,\qquad
	\eta_t=\frac{m_2}{\sqrt{m_4}},\qquad
	J_t=\frac{f'(t)}{2\pi}\sqrt{\frac{m_4}{m_2}}.
\end{equation}
The conditional curvature correction removes the $f''(t)^2m_2$
term. 
On $[a,b]$, the limiting expected peak count is
$(f(b)-f(a))\sqrt{m_4/m_2}/(2\pi)$, consistent with the transformed
interval length.

A smooth mixture can also change the limiting height shape. Let
$Y_1,Y_2$ be independent centered stationary unit-variance processes
with smooth covariances $r_j$ and moments $m_{2,j},m_{4,j}$.
For $C^4$ weights $w_1(t)=\cos\theta(t)$ and
$w_2(t)=\sin\theta(t)$, set
\[
X(t)=w_1(t)Y_1(t)+w_2(t)Y_2(t),\qquad
C(s,t)=\sum_{j=1}^2w_j(s)w_j(t)r_j(t-s).
\]
The variance is one because $w_1^2+w_2^2=1$, and direct differentiation gives
\begin{align}
	\lambda_1(t)&=\sum_{j=1}^2\{w_j'(t)^2+m_{2,j}w_j(t)^2\},
	\label{eq:mixture-lambda1}\\
	\lambda_2(t)&=\sum_{j=1}^2\{w_j''(t)^2+4m_{2,j}w_j'(t)^2
	+m_{4,j}w_j(t)^2-2m_{2,j}w_j(t)w_j''(t)\}.
	\label{eq:mixture-lambda2}
\end{align}
Together with $a(t)=\lambda_1'(t)/2$, these expressions determine
$\delta_t$, $\eta_t$, and $J_t$.
For a concrete example, take squared-exponential $r_j$ with length
scales $\ell_1=1$, $\ell_2=2$, and $\theta(t)=t$.
At $t=0$ the values are $\lambda_1=2$, $a=0$, and $\lambda_2=7$,
so $\eta_0=2/\sqrt7$. At $t=\pi/2$ they are
$\lambda_1=5/4$, $a=0$, and $\lambda_2=91/16$, so
$\eta_{\pi/2}=5/\sqrt{91}$. Thus the limiting peak height
densities differ at the two locations. Finite-grid probabilities and
densities follow from the same exact formula.

\begin{figure}[t]
	\begin{center}
		\begin{tabular}{cc}
			\includegraphics[trim=20 30 30 30,clip,width=2.8in]{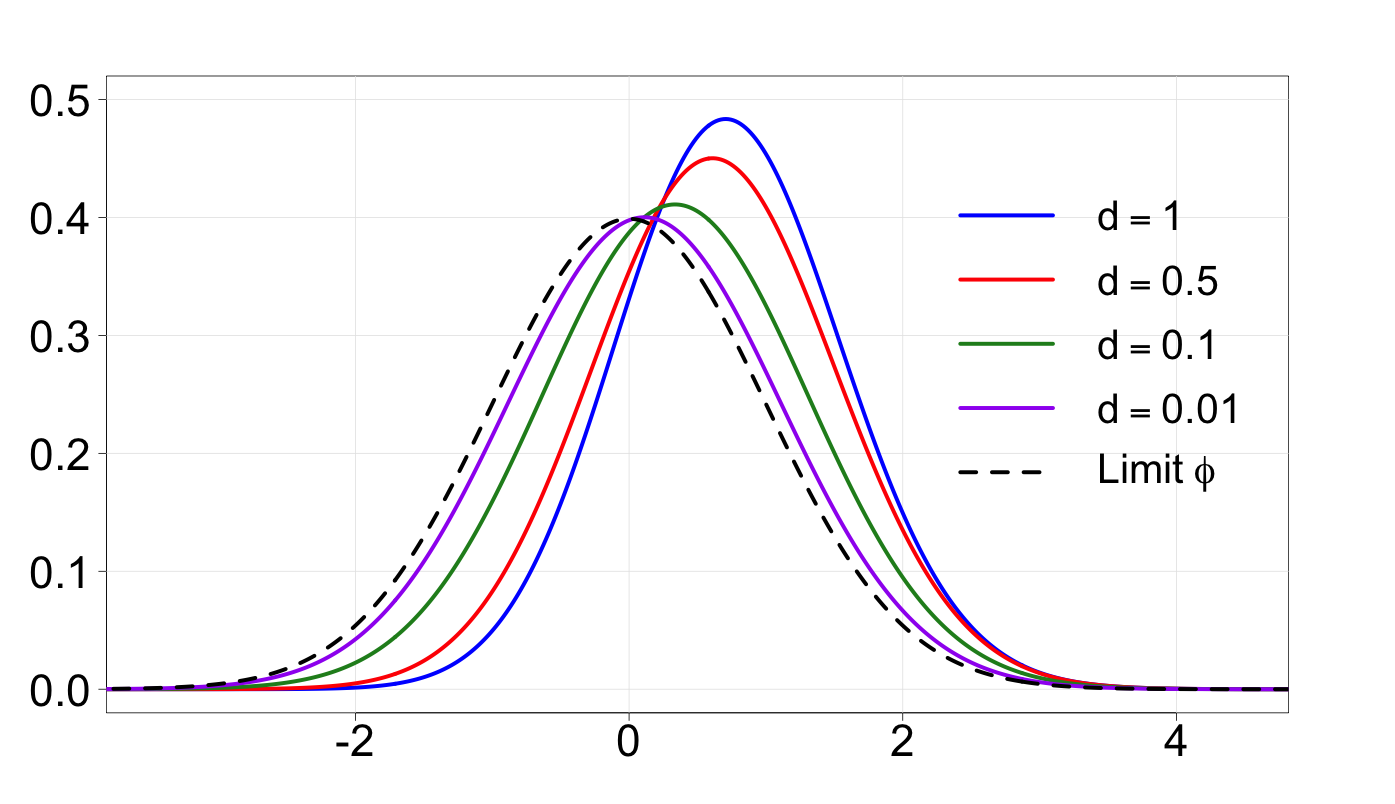} &
			\includegraphics[trim=20 30 30 30,clip,width=2.8in]
			{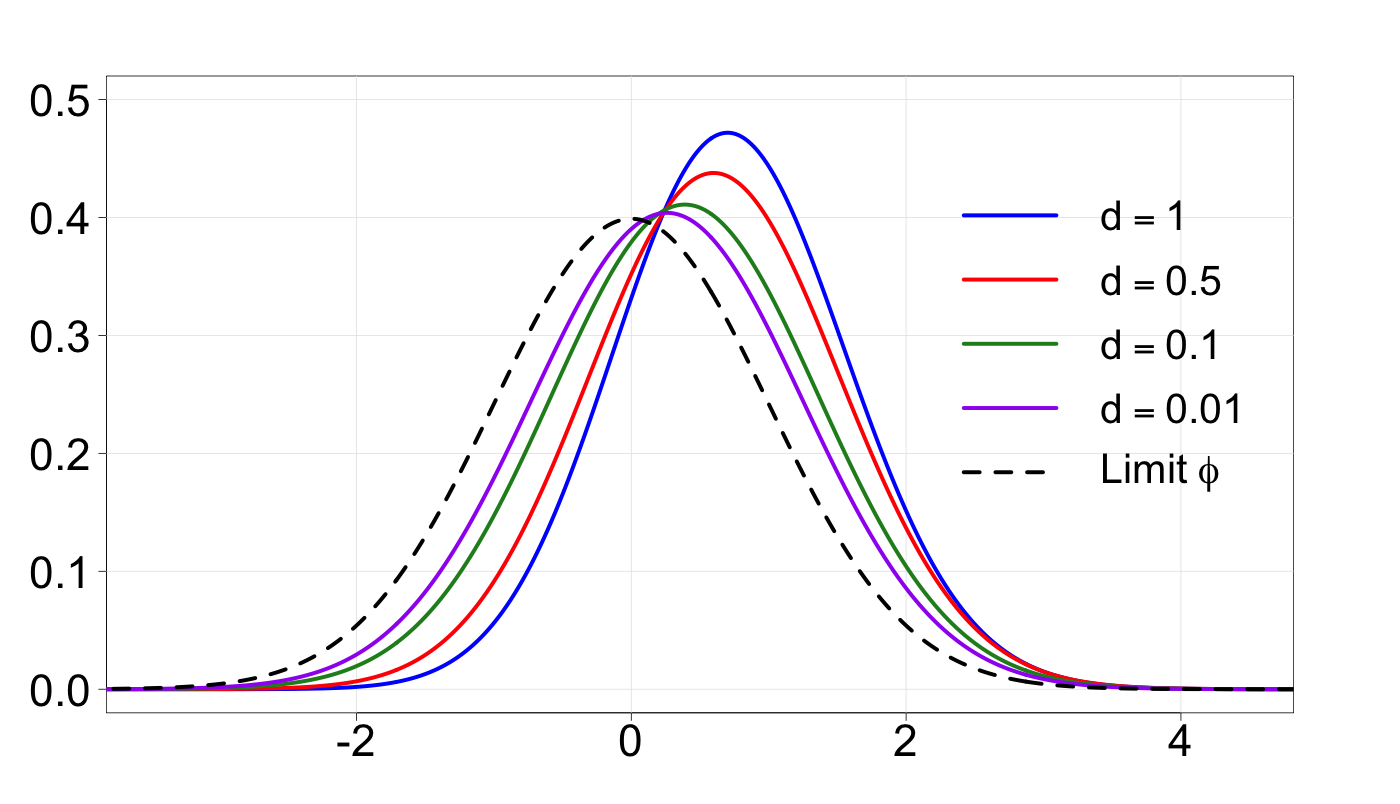}\\
			{\small (a) Ornstein-Uhlenbeck} &
			{\small (b) Mat\'ern $\nu=2$ }
		\end{tabular}
		
		\vspace{3mm}
		
		\begin{tabular}{cc}
			\includegraphics[trim=20 30 30 30,clip,width=2.8in]{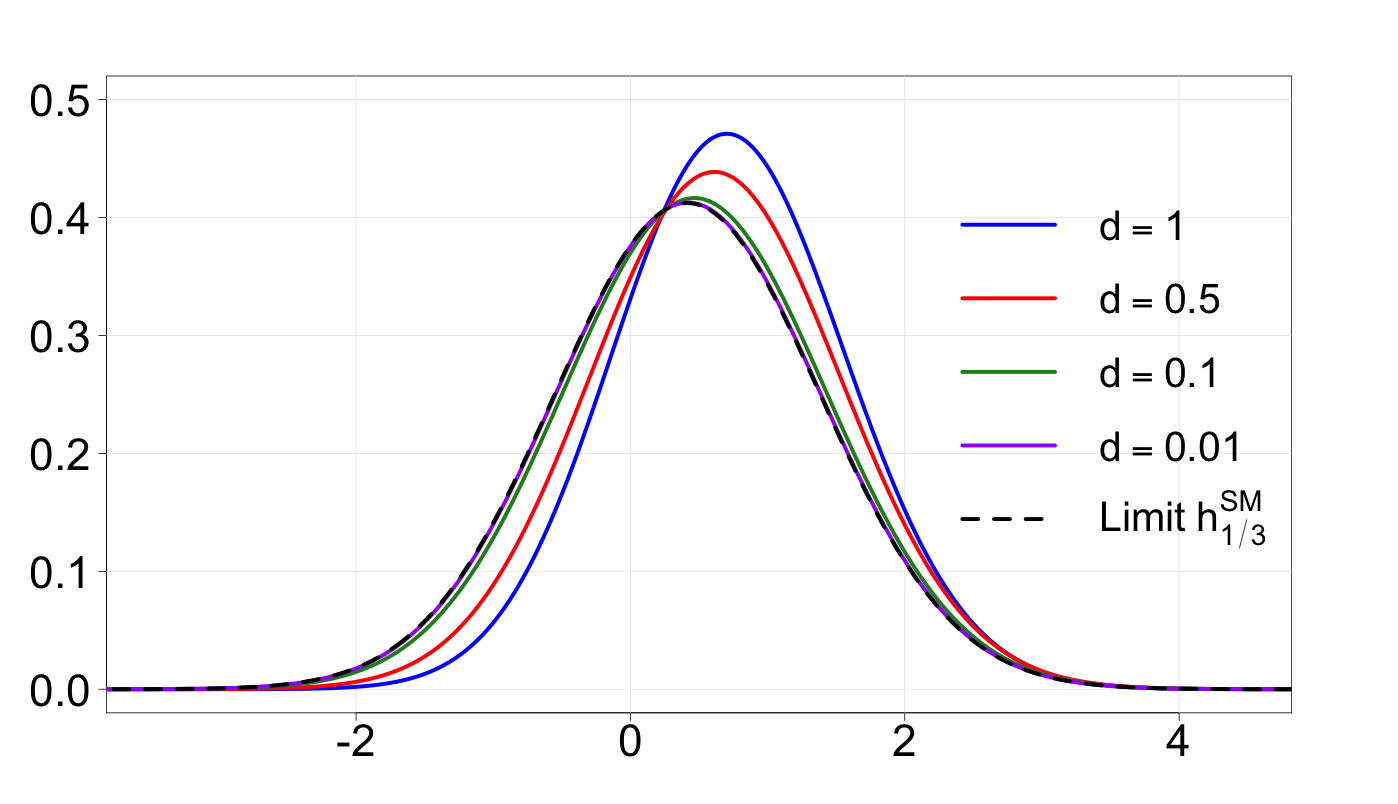} &
			\includegraphics[trim=20 30 30 30,clip,width=2.8in]
			{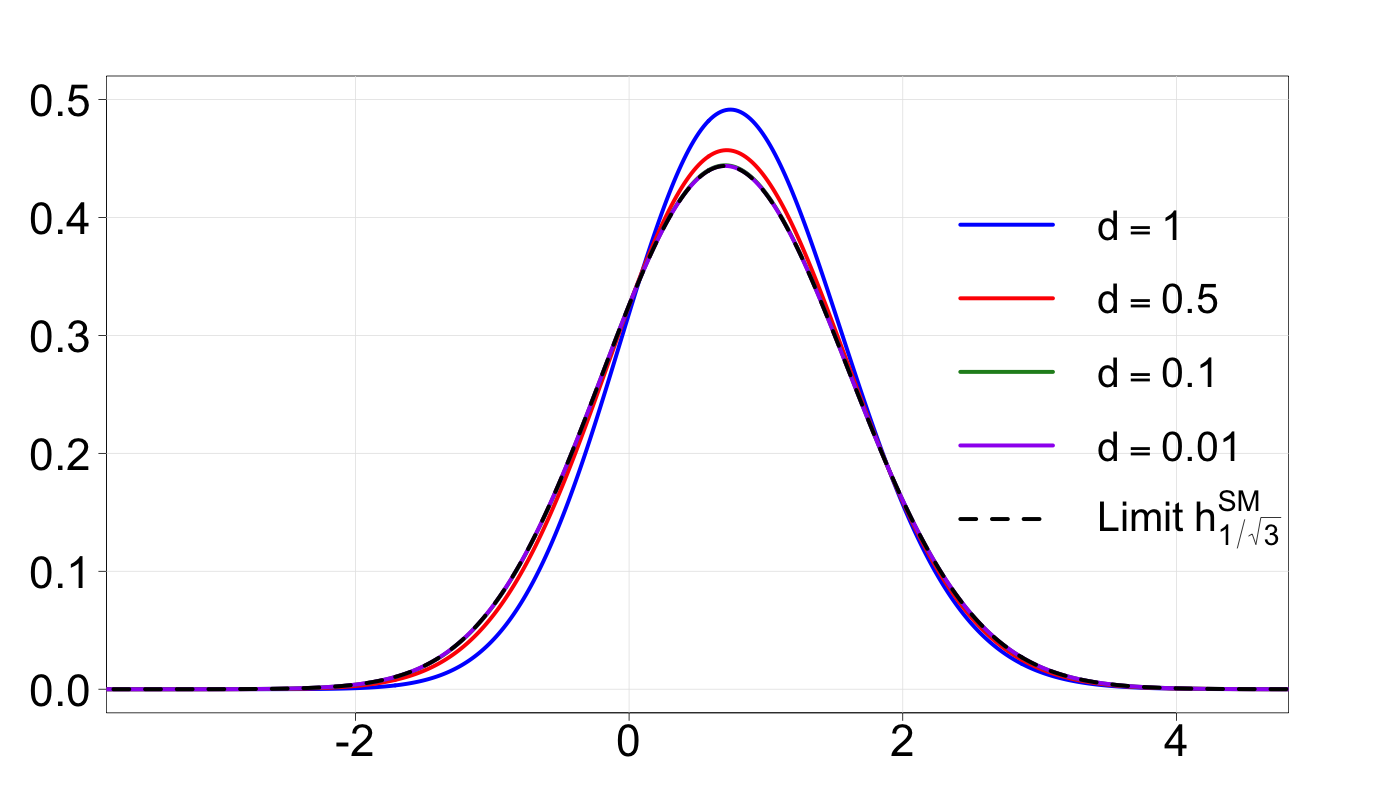}\\
			{\small (c) Mat\'ern $\nu=5/2$} &
			{\small (d) Squared exponential} 
		\end{tabular}
		\caption{\label{fig:density-comparison}\small
			Exact finite-grid peak height densities at mesh sizes $d=1, \, 0.5, \, 0.1, \, 0.01$, with their limiting densities. The Ornstein--Uhlenbeck and Mat\'ern $\nu=2$ limits are $\phi$; the Mat\'ern $\nu=5/2$ and squared-exponential limits are the smooth densities with $\eta=1/3$ and $\eta=1/\sqrt3$, respectively. All curves are obtained by deterministic numerical evaluations.}
	\end{center}
\end{figure}

\begin{figure}[tbp]
	\centering
	\includegraphics[width=.88\textwidth]{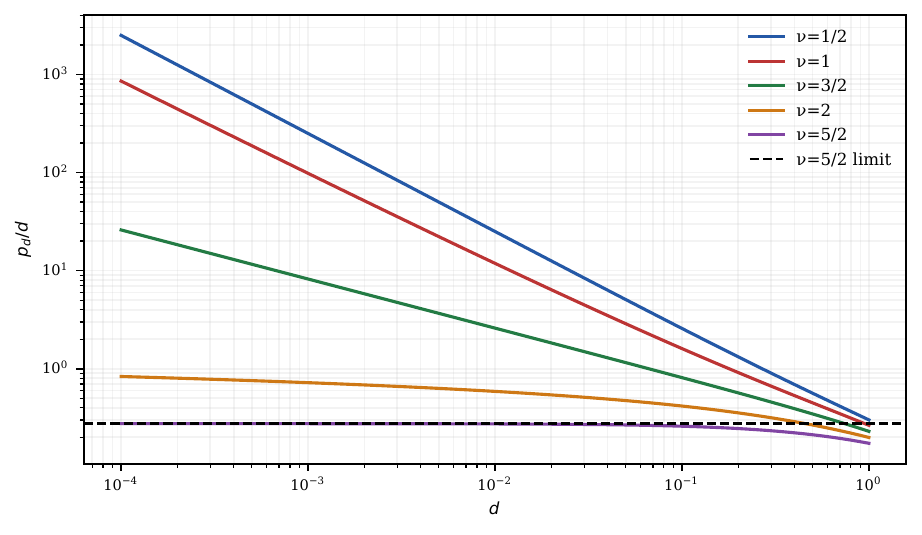}
	\vspace{-3mm}
	\caption{\small Exact Mat\'ern peak intensities per unit length, $p_d/d$,
		with $\theta=1$. On an interval of length $T$, the expected number
		of interior grid peaks is $Tp_d/d+O(p_d)$. The dashed line is the
		finite limit $\sqrt3/(2\pi)$ for $\nu=5/2$; the curves for
		$\nu\le2$ diverge as $d\downarrow0$.}
	\label{fig:matern-counts}
\end{figure}

\subsection{Deterministic numerical comparisons}\label{sec:numerics}

For all numerical comparisons we set $\lambda=1$ in the
Ornstein--Uhlenbeck covariance, $\theta=1$ in the Mat\'ern covariance,
and $\ell=1$ in the squared-exponential covariance. Thus the mesh values
in the density figure and the error table use the same model scales. The peak probability $p_d$ and finite-mesh peak height density $h_d(x)$ are computed from \eqref{eq:h-stationary}.

Figure~\ref{fig:density-comparison} compares finite-mesh densities with
their limiting laws. Figure~\ref{fig:matern-counts} shows the corresponding
Mat\'ern peak intensity per unit length. The distinction between
convergence of the entire density and relative accuracy of a small tail
probability is quantified by
\begin{equation*}
	E_1(d)=\int_{\R}|h_d(x)-h_\infty(x)|\dd x,\qquad
	R_u(d)=\frac{\overline H_d(u)}{\overline H_\infty(u)}-1,
\end{equation*}
where $h_\infty=\phi$ in the rough cases and $h_\infty=h^{\mathrm{SM}}$
in the smooth cases. Positive $R_u$ means that the limiting law
underestimates the exact upper tail.

Table~\ref{tab:numerical-errors} illustrates the slow boundary behavior.
At $d=0.01$, the Mat\'ern $\nu=2$ height-curvature correlation is about
$0.191$, but the Gaussian tail at $u=3$ is smaller than the exact tail
by a factor of about $1.97$. The squared-exponential smooth approximation
at the same mesh is much more accurate. These comparisons show that the accuracy of a limiting approximation depends on the covariance model.
When accurate tail probabilities are needed, the exact discrete distribution
at the observed grid spacing is therefore preferable. The table reports errors and correlations to at most three significant digits.

\begin{table}[tbp]
	\centering\small
	\caption{Errors of the limiting density and tails at the stated mesh. Here $E_1(d)=\int_{\mathbb R}|h_d(x)-h_\infty(x)|\dd x$ and $R_u(d)=\overline H_d(u)/\overline H_\infty(u)-1$. }
	\label{tab:numerical-errors}
	\begin{tabular}{@{}llllll@{}}
		\toprule
		Model & $d$ & $\eta_d$ & $E_1(d)$ & $R_2(d)$ & $R_3(d)$\\
		\midrule
		Ornstein--Uhlenbeck & 0.1 & $0.301$ & $0.273$ & $0.913$ & $1.29$ \\
		Ornstein--Uhlenbeck & 0.01 & $0.0995$ & $0.0896$ & $0.281$ & $0.397$ \\
		Squared exponential & 0.1 & $0.578$ & $0.000631$ & $-0.000926$ & $-0.00292$ \\
		Squared exponential & 0.01 & $0.577$ & $6.32\times10^{-6}$ & $-9.29\times10^{-6}$ & $-2.94\times10^{-5}$ \\
		Mat\'ern $\nu=2$ & 0.1 & $0.27$ & $0.27$ & $0.968$ & $1.44$ \\
		Mat\'ern $\nu=2$ & 0.01 & $0.191$ & $0.192$ & $0.656$ & $0.965$ \\
		Mat\'ern $\nu=5/2$ & 0.1 & $0.354$ & $0.0209$ & $0.0402$ & $0.0482$ \\
		Mat\'ern $\nu=5/2$ & 0.01 & $0.335$ & $0.00215$ & $0.00415$ & $0.00503$ \\		
		\bottomrule
	\end{tabular}
\end{table}

\section{Discussion}\label{sec:discussion}

The exact discrete law and the continuous peak law describe related
sampling rules. The discrete law is an ordinary conditional distribution
determined by three Gaussian observations. Under the smooth covariance
and nondegeneracy assumptions of Section~\ref{sec:smooth}, its infill
limit is $h_{\eta_t}^{\mathrm{SM}}$ with $\eta_t\in(0,1]$.
An interpretation as the Palm law of continuous local maxima requires
the additional sample path and Kac--Rice assumptions stated there.
Theorem~\ref{thm:rough-nonstationary} characterizes convergence to the
Gaussian law for general nonstationary covariance, including vanishing
peak probabilities.

Remark~\ref{remark:unify} places these established limits in the same
continuous family: $h_0^{\mathrm{SM}}=\phi$, the interior members are
the smooth peak laws, and $h_1^{\mathrm{SM}}$ is Rayleigh. For each fixed
nonconstant, mean-square continuous stationary process, the fourth
spectral moment identifies the parameter: $\eta=0$ for $m_4=\infty$
and $\eta=m_2/\sqrt{m_4}\in(0,1]$ for $m_4<\infty$.
This connects Gaussian height neutrality and smooth curvature weighting
within a single family, while retaining the assumptions that determine
which limit applies.

The height laws converge in total variation, which implies
\[
\sup_{u\in\R}|\overline H_d(u)-\overline H_\infty(u)|
\leq\frac12\int_\R|h_d(x)-h_\infty(x)|\dd x\to 0.
\]
This is uniform absolute tail convergence. It does not alone imply that
the ratio of the tails tends to one at thresholds $u_d\to\infty$.
The stronger density bound in Remark~\ref{rem:rough-relative-tails}
does give relative Gaussian tail convergence when
$u_d\sqrt{R_{t,d}}\to0$, with $R_{t,d}\to0$. In the stationary case
this sufficient condition is $u_d\eta_d\to0$.
Outside such controlled ranges, the distinction remains essential.
The following consequence of the Ornstein--Uhlenbeck formula makes this
distinction explicit. Fix $c>0$ and let $u_d=c/b_d$. Since $b_d\to0$,
$u_d\to\infty$, and
\begin{equation}\label{eq:OU-moving-tail}
	\frac{\overline H_d(u_d)}{\Psi(u_d)}
	=\frac{1}{p_d}\E\{\Phi(b_d Z)^2\mid Z>u_d\}
	\to 4\Phi(c)^2,
\end{equation}
where $Z$ is standard Gaussian. Indeed, for each $\varepsilon>0$,
\[
\P\{b_dZ>c+\varepsilon\mid Z>c/b_d\}
=\frac{\Psi((c+\varepsilon)/b_d)}{\Psi(c/b_d)}\to 0
\]
by the Gaussian tail asymptotic. Thus $b_dZ\to c$ in conditional
probability, and bounded continuity gives the expectation limit in
\eqref{eq:OU-moving-tail}; finally $p_d\to1/4$.
For this covariance,
\[
u_d\eta_d
=c\sqrt{\frac{2\{1+r(d)\}}{3-r(d)}}\to\sqrt2\,c>0.
\]
These thresholds therefore lie outside the sufficient range
$u_d\eta_d\to0$ for relative Gaussian tail convergence. Relative
approximation outside that range requires further tail analysis or
direct use of the exact finite-grid law.

Extensions to nonzero mean, varying variance, irregular observation grids, and Gaussian fields on $N$-dimensional lattices remain natural. Finite-dimensional Gaussian conditioning still yields exact comparison
probabilities in these settings. The limiting peak height law may,
however, change with the process model and the geometry of the comparisons.

\section*{Acknowledgments}
The authors were supported by NSF Grant DMS-2220523 and Simons Foundation Collaboration Grant \#854127.

	\begin{small}
		
	\end{small}
	
	\bigskip
	
	\begin{quote}
		\begin{small}
			
			\textsc{Dan Cheng and John Ginos}\\
			School of Mathematical and Statistical Sciences \\
			Arizona State University\\
			900 S. Palm Walk\\
			Tempe, AZ 85281, U.S.A.\\
			E-mail: \texttt{cheng.stats@gmail.com}

		\end{small}
	\end{quote}
	
\end{document}